\title{Characterizations of Hardy-Orlicz spaces of Quasiconformal Mappings}
\author{Sita Benedict}
\newtheorem{theorem}{Theorem}[section]
\newtheorem{lemma}[theorem]{Lemma}
\newtheorem*{thma}{Theorem A}
\newtheorem*{theorem*}{Theorem}
\newtheorem*{proposition*}{Proposition}
\newtheorem*{corollary*}{Corollary}
\newtheorem*{remark*}{Remark}
\numberwithin{equation}{section}
\newcommand{\Sn}{\mathbb{S}^{n-1}} 
\newcommand{\Bn}{\mathbb{B}^{n}} 
\newcommand{\Rn}{\mathbb{R}^{n}} 
\newcommand{\omn}{\omega_{n-1}}
\newcommand{\func}{f:\Bn \rightarrow \Rn}
\begin{document}
\begin{abstract}
An $H^p$-theory of quasiconformal mappings on $\Bn$ has already been established. By replacing $t^p$ with a general increasing growth function $\psi(t)$ we define the Hardy-Orlicz spaces of quasiconformal mappings and prove various characterizations of these spaces.
\end{abstract}

\footnotetext{
{\it 2010 Mathematics Subject Classification: 30C65 }\\
{\it Key words and phrases. Hardy-Orlicz, quasiconformal mappings}
\endgraf The author was partially supported by the Academy of Finland grants
131477 and 263850.}

\maketitle
\section{Introduction}
Hardy-Orlicz spaces are a natural generalization of the Hardy spaces. Holomorphic functions on the unit disk in $\mathbb{C}$ belonging to Hardy-Orlicz spaces have been studied in \cite{stoll1}, \cite{khalil}, \cite{deeb1}, and \cite{deeb2}.  For the higher dimensional case of holomorphic maps on the unit ball in $\mathbb{C}^n$ see for example \cite{hardyorlicz}, \cite{pcarleson} and \cite{charpentier}. In this paper we are interested in the generalization of Hardy spaces of quasiconformal mappings on the unit ball in $\Rn$. 

A quasiconformal mapping $f:\Bn \rightarrow \Rn$ belongs to the Hardy space $H^p$ for a fixed $0 < p < \infty$ if the values $\int_{\Sn} |f(r\omega)|^p d\sigma$ are uniformly bounded for all $r\in [0,1)$. Here $\sigma$ is the surface measure on $\mathbb{S}^{n-1}$. For results on these spaces see especially \cite{hpqc} and also \cite{nolder} and \cite{noldercarleson}. We highlight in particular the following characterization theorem, proved as several results in \cite{hpqc}. 
\begin{thma}
Let $f$ be a quasiconformal mapping of $\Bn$, $f_i$ one of its component functions and fix $0 < p < \infty$. Then the following are equivalent:
\begin{enumerate}
\item $f \in H^p$
\item $f(\omega) \in L^p(\Sn)$
\item $f^*(\omega) \in L^p(\Sn)$
\item $f_i^*(\omega) \in L^p(\Sn)$
\item $\int_0^1 (1-r)^{n-2}M(r,f)^pdr < \infty$
\item $\int_{\Bn} a_f^p(x)(1-|x|)^{p-1}dx < \infty$.
\end{enumerate}
\end{thma}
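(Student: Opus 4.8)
The plan is to derive most of the equivalences from a single pointwise comparison and to isolate the two implications where genuine harmonic analysis seems unavoidable. Throughout write $d(x)=1-|x|$, $B_x=B\big(x,\tfrac12 d(x)\big)$ and let $a_f(x)=\big(|f(B_x)|/|B_x|\big)^{1/n}$ be the average derivative of $f$. I will take as known three standard facts about a $K$-quasiconformal $f:\Bn\to\Rn$: (i) the distortion estimate $\operatorname{diam} f\big(B(x,td(x))\big)\asymp a_f(x)\,d(x)$ together with $B\big(f(x),c\,a_f(x)d(x)\big)\subset f\big(B(x,td(x))\big)$ for $\tfrac14\le t\le 1$, and the Harnack property $a_f(x)\asymp a_f(y)$ whenever $|x-y|\le\tfrac12 d(x)$; (ii) the Gehring--Hayman theorem, which controls the length of the $f$-image of a hyperbolic geodesic of $\Bn$ by the distance between the images of its endpoints, so that $\ell\big(f([0,\omega))\big)\lesssim|f(\omega)-f(0)|$ whenever the radial limit $f(\omega)$ exists; (iii) the existence of finite non-tangential limits $f(\omega)=\lim_{r\to1}f(r\omega)$ for $\sigma$-a.e.\ $\omega\in\Sn$, all constants depending only on $K$ and $n$.

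The first step is to establish the comparison
\[
f^{*}(\omega)\ \asymp\ |f(0)|+|f(\omega)|\ \asymp\ |f(0)|+\int_0^1 a_f(t\omega)\,dt,\qquad\text{for }\sigma\text{-a.e. }\omega\in\Sn .
\]
Chopping $[0,\omega)$ into arcs of unit hyperbolic length and summing (i) over the corresponding Whitney balls gives $\int_0^1 a_f(t\omega)\,dt\asymp\sum_k a_f(x_k)d(x_k)\asymp\ell\big(f([0,\omega))\big)$; feeding in (ii), the trivial bounds $|f(\omega)|\le\sup_r|f(r\omega)|\le|f(0)|+\operatorname{diam}f([0,\omega))$, and a bounded Harnack chain joining an arbitrary point of the non-tangential cone $\Gamma(\omega)$ to the corresponding radial point (this replaces the radial maximal function by $f^{*}$) yields the display. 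Granting it, the heart of Theorem~A is immediate: $(3)\Rightarrow(1)$ since $|f(r\omega)|\le f^{*}(\omega)$; $(1)\Rightarrow(2)$ by (iii) and Fatou's lemma; $(2)\Leftrightarrow(3)$ by comparing $p$-th powers in the display and integrating; and $(3)\Rightarrow(4)$ since $f_i^{*}\le f^{*}$ pointwise.

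It then remains to close the cycle through $(4)$, $(5)$ and $(6)$. For $(4)\Rightarrow(1)$ one first gets $f_i(\omega)\in L^{p}$ from Fatou, and then uses the inclusion $B(f(x),c\,a_f(x)d(x))\subset f(B_x)$ from (i) to see that the oscillation of \emph{every} component over a Whitney ball is comparable to $a_f(x)d(x)$; reconstructing the radius image shows that $f^{*}(\omega)$, up to $|f(0)|$, is comparable to $\sum_k\operatorname{osc}_{B_{x_k}}f_i$, and the task becomes the recovery of the full boundary function in $L^{p}$ from the single component. For $p\ge1$ this can be organised around a maximal/square-function estimate; for $0<p<1$ one must argue through the distribution function, using that the level sets $\{x:a_f(x)d(x)>\lambda\}$ pack into Carleson-type families (a stopping-time decomposition) together with modulus/distortion estimates. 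The equivalence $(1)\Leftrightarrow(5)$ is obtained from a variant of the comparison for $M(r,f)$, the quasi-monotonicity of $r\mapsto M(r,f)$ and a ``spreading'' estimate (a large value at one point forces comparably large values on a boundary cap of proportional size, both consequences of quasiconformality), followed by a Hardy-type integral inequality. Finally $(1)\Leftrightarrow(6)$: in polar coordinates, and after discretizing, $(6)$ is comparable to the $L^{p}(\Sn)$-norm of the conical functional $S_p f(\omega):=\big(\int_{\Gamma(\omega)}a_f(x)^{p}d(x)^{p-n}\,dx\big)^{1/p}$, so this is the quasiconformal analogue of the Lusin area-integral theorem $\|f^{*}\|_{L^{p}}\asymp\|S_p f\|_{L^{p}}$; the inequality $\|S_p f\|_{L^{p}}\lesssim\|f^{*}\|_{L^{p}}$ comes from integrating the reverse distortion bound $a_f(x)d(x)\lesssim\operatorname{osc}_{CB_x}f$ over cones together with a good-$\lambda$ inequality, while the other direction replaces the harmonicity used in the classical proof by the distortion estimates (i) and the $A_\infty$-property of the Jacobian $J_f$.

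I expect the last step and the small-$p$ part of $(4)\Rightarrow(1)$ to be the main obstacles: everything else reduces to the quasiconformal distortion lemmas and the Gehring--Hayman theorem, but the passage between the ``$\ell^{1}$-type'' quantities ($f^{*}$, the radius length) and the ``$\ell^{p}$-type'' quantity in $(6)$ when $p\ne1$, and likewise the upgrade from one component to the full map, genuinely require the Carleson-measure and $A_\infty$ structure of the average derivative rather than any soft comparison.
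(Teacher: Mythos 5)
Your central display is false, and with it the backbone of the argument. The Gehring--Hayman theorem does not give $\ell\bigl(f([0,\omega))\bigr)\lesssim|f(\omega)-f(0)|$: it bounds the length of the image of a hyperbolic geodesic by the length (or diameter) of the image of \emph{any competing curve in} $\Bn$, i.e.\ by the \emph{internal} distance in $f(\Bn)$ between $f(0)$ and $f(\omega)$, and this internal distance can be arbitrarily much larger than the Euclidean one. Concretely, map $\mathbb{B}^2$ conformally onto a U-shaped tube that starts near the origin, travels out to distance $R$, and returns to terminate at a point of modulus $2$: for the boundary point $\omega$ corresponding to the far end one has $|f(\omega)|=2$ while every curve in the image domain from $f(0)$ to $f(\omega)$ must traverse the tube, so $f^*(\omega)\approx\ell\bigl(f([0,\omega))\bigr)\approx R$. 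Hence neither $f^{*}(\omega)\asymp|f(0)|+|f(\omega)|$ nor $|f(0)|+|f(\omega)|\asymp|f(0)|+\int_0^1 a_f(t\omega)\,dt$ holds pointwise, and your derivation of $(2)\Leftrightarrow(3)$ collapses. That equivalence is true only in the integrated sense, and the known proof (reproduced in Orlicz form in Theorem \ref{maximalstarequiv} of this paper) is genuinely distributional: a modulus estimate (Lemma \ref{lemma42restatement}) shows that, after translating so that $0\notin f(\Bn)$, the set of $\omega$ in the cap $S_x$ with $|f(\omega)|\geq|f(x)|/C$ has at least half the measure of $S_x$; this yields $(f^{*})^{p/2}\leq 2M\bigl(|f|^{p/2}\bigr)$ pointwise for the Hardy--Littlewood maximal operator $M$, and one concludes from the $L^2$-boundedness of $M$. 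The companion estimate of Lemma \ref{Mineq}, bounding $\sigma\{\omega\in S_x:|f(\omega)-f(x)|>M\,d(f(x),\partial\Omega)\}$, is what drives the converse directions.

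Beyond this, the implications you yourself flag as the main obstacles --- $(4)\Rightarrow(1)$ for small $p$, $(1)\Leftrightarrow(5)$, and $(1)\Leftrightarrow(6)$ --- are only described, not proved; as written they are a plan rather than an argument. For the record, the present paper does not reprove Theorem A (it cites \cite{hpqc}); what it does prove are the Orlicz-space analogues, and the specialization $\psi(t)=t^p$ recovers Theorem A along the distributional route: $(2)\Leftrightarrow(5)$ by comparing the distribution function of $|f(\omega)|$ with $M(r,f)$ via the modulus of radial curve families together with a Carleson-measure estimate (Theorem \ref{firstequiv} and Lemma \ref{carleson}); $(4)\Rightarrow(1)$ by the pointwise bound $d(f(x),\partial\Omega)\leq Cf_i^{*}(\omega)$ on $\Gamma(\omega)$ followed by Lemma \ref{doublinglemma}; and $(1)\Leftrightarrow(6)$ through the Carleson-measure property of $a_f^p(x)(1-|x|)^{p-1}|f(x)|^{-p}\,dx$ and the cone/Fubini identity in Lemma \ref{supaf}. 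Any correct write-up must pass through estimates of this type; no pointwise comparison between $f^{*}$, $|f(\omega)|$ and the radial integral of $a_f$ is available.
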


The function $a_f(x)$ is an averaged version of the differential $Df(x)$, see Section 2 for its definition. The equivalence of (1) and (6) in Theorem A is the quasiconformal version of the following area characterization for $f$ conformal:
\begin{eqnarray*}
f\in H^p \;\;\;\textnormal{if and only if} \;\;\int_{\mathbb{B}^2}|f'(x)|^p(1-|x|)^{p-1} dx < \infty,
\end{eqnarray*}
see \cite{dirichlet}. The non-tangential maximal function $f^*$ and maximum modulus $M(r,f)$ are defined in Section 3. 

Our main results generalize Theorem A to Hardy-Orlicz spaces of quasiconformal mappings. Let $\psi$ be a \textit{growth function}; that is, a differentiable and strictly increasing function mapping $[0, \infty]$ to itself such that $\psi(0) = 0$. Then a quasiconformal mapping $f: \Bn \rightarrow \Rn$ belongs to the Hardy-Orlicz space $H^\psi$ if there exists $\delta > 0$ such that
\begin{align*}\label{hpsidef}
\sup_{0<r<1}\int_{\mathbb{S}^{n-1}} \psi(\delta|f(r\omega)|)d\sigma(\omega) < \infty.
\end{align*}

Our first result shows that much of Theorem A extends to the Hardy-Orlicz setting without any additional restrictions on the growth of $\psi$. 
\begin{theorem}\label{equivthm}
Given a growth function $\psi$ and a quasiconformal mapping $f:\Bn \rightarrow \Rn$ the following are equivalent:
\begin{enumerate}
\item $f(x) \in H^\psi$
\item $\psi(\delta_1|f(\omega)|) \in L^1(\Sn)$\;\; \text{for some $\delta_1 >0$}
\item $\psi(\delta_2f^*(\omega)) \in L^1(\Sn)$\;\; \text{for some $\delta_2 >0$}
\item $\int_0^1 (1-r)^{n-2}\psi(\delta_3 M(r,f)) dr < \infty$ \;\; \text{for some $\delta_3 >0$}.
\end{enumerate}
\end{theorem}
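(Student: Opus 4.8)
The plan is to prove the chain of implications $(1)\Rightarrow(2)\Rightarrow(3)\Rightarrow(4)\Rightarrow(1)$, leaning on the quasiconformal machinery already used for Theorem A while being careful that the constant $\delta$ is allowed to change at each step (this flexibility is exactly what lets us avoid any $\Delta_2$-type growth hypothesis on $\psi$). First, for $(1)\Rightarrow(2)$: if $f\in H^\psi$ with parameter $\delta$, then the radial limits $f(\omega)=\lim_{r\to1}f(r\omega)$ exist $\sigma$-a.e.\ — this is a standard fact for quasiconformal maps of the ball, since each component is monotone in the sense of Lebesgue and one has the usual nontangential boundary behaviour — and by Fatou's lemma $\int_{\Sn}\psi(\delta|f(\omega)|)\,d\sigma\le\liminf_{r\to1}\int_{\Sn}\psi(\delta|f(r\omega)|)\,d\sigma<\infty$, so $(2)$ holds with $\delta_1=\delta$.

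For $(2)\Rightarrow(3)$ the key tool is the pointwise comparison between the nontangential maximal function and the boundary function that underlies the equivalence of (2) and (3) in Theorem A. The cleanest route is to invoke the Gehring–Hayman type estimate (flagged in the preamble) together with a Harnack-type inequality for quasiconformal maps: there is a constant $C=C(n,K)$ so that for $\sigma$-a.e.\ $\omega$ one has $f^*(\omega)\le C\,|f(\omega)| + C$, or more precisely $f^*(\omega)$ is controlled by a maximal function of $|f(\cdot)|$ over a boundary cap. If $f^*(\omega)\le C|f(\omega)|$ pointwise we are done immediately: $\psi$ increasing gives $\psi(\delta_1 C^{-1}f^*(\omega))\le\psi(\delta_1|f(\omega)|)$, so $(3)$ holds with $\delta_2=\delta_1/C$. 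If instead one only controls $f^*$ by a Hardy–Littlewood maximal function of $|f|$ on $\Sn$, then one needs a weak-type $(1,1)$ argument adapted to Orlicz space: the distribution function of $f^*$ is dominated via the maximal theorem by that of $|f(\omega)|$ up to a dimensional constant and a shift in level, and integrating $\psi$ against the distribution function (again halving $\delta$ to absorb the constant) yields the claim. I expect this to be the main obstacle — making the maximal-function-to-boundary-function passage work in the Orlicz setting without a doubling condition on $\psi$, which forces us to track how the level of the maximal function estimate scales.

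For $(3)\Rightarrow(4)$, recall $M(r,f)=\max_{|x|=r}|f(x)|$ is attained at some $x_r=r\omega_r$; the Gehring–Hayman inequality bounds the length of the image of the radius to $\omega_r$, hence $|f(r\omega_r)|$, by the image diameter along a nontangential approach, so $M(r,f)\le C\,f^*(\omega)$ for all $\omega$ in a boundary cap $\Delta_r$ around $\omega_r$ of $\sigma$-measure comparable to $(1-r)^{n-1}$. Averaging, $\psi(\delta_2 C^{-1}M(r,f))\le \sigma(\Delta_r)^{-1}\int_{\Delta_r}\psi(\delta_2 f^*(\omega))\,d\sigma(\omega)\lesssim (1-r)^{-(n-1)}\int_{\Delta_r}\psi(\delta_2 f^*)\,d\sigma$; multiplying by $(1-r)^{n-2}$ and integrating in $r$, a Fubini argument (each $\omega$ lies in $\Delta_r$ only for $r$ in an interval of length $\lesssim 1-|{\rm dist}|$, contributing a convergent factor) gives $\int_0^1(1-r)^{n-2}\psi(\delta_3 M(r,f))\,dr\lesssim\int_{\Sn}\psi(\delta_2 f^*)\,d\sigma<\infty$ with $\delta_3=\delta_2/C$. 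Finally $(4)\Rightarrow(1)$ is essentially immediate: $\int_{\Sn}\psi(\delta|f(r\omega)|)\,d\sigma\le\omega_{n-1}\,\psi(\delta M(r,f))$, and we must convert the radial integrability in $(4)$ into a uniform bound in $r$; since $r\mapsto M(r,f)$ is nondecreasing, $\psi(\delta_3 M(r,f))$ is nondecreasing, so $(1-r)^{n-1}\psi(\delta_3 M(r,f))\lesssim\int_r^{(1+r)/2}(1-t)^{n-2}\psi(\delta_3 M(t,f))\,dt\to 0$, giving $\psi(\delta_3 M(r,f))=o((1-r)^{-(n-1)})$ — not yet a uniform bound. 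To get the genuine $H^\psi$ bound one instead runs $(4)\Rightarrow(2)$ via the integral-geometric estimate relating $M(r,f)$ to boundary values (the same Gehring–Hayman/Fubini computation in reverse), then closes the loop with the trivial $(2)\Rightarrow(1)$ that also uses Fatou in the radial variable together with the fact that for quasiconformal maps $\int_{\Sn}\psi(\delta|f(r\omega)|)d\sigma$ is, up to a fixed multiplicative constant and a fixed change of $\delta$, comparable for all $r$ near $1$. Throughout, the only structural hypotheses used are that $\psi$ is increasing (for all the monotone comparisons) and that it maps $0$ to $0$ (so that halving the argument never creates a spurious additive constant); differentiability of $\psi$ is not needed here and is presumably reserved for later results.
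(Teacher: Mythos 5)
Your skeleton ((1)$\Rightarrow$(2) by Fatou, the observation that $|f(r\omega)|\le f^*(\omega)$ makes one implication trivial, and the recognition that everything hinges on a maximal-function estimate and on relating $M(r,f)$ to boundary data) is broadly the right shape, but the two steps you yourself flag as the hard ones are not closed, and the sketches you offer for them fail. For (2)$\Rightarrow$(3): the pointwise bound $f^*(\omega)\le C|f(\omega)|+C$ is simply false, and the fallback ``weak-type $(1,1)$ argument adapted to Orlicz space'' does not work either --- composing the Hardy--Littlewood maximal inequality with $\psi$ and ``halving $\delta$'' cannot absorb a multiplicative constant on the \emph{value} of $\psi$ unless $\psi$ is doubling, which is precisely the hypothesis the theorem is designed to avoid. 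The paper's resolution (Theorem \ref{maximalstarequiv}) is to set $\phi=\psi^{1/2}$, use the modulus estimate of Lemma \ref{lemma42restatement} (which says $|f(\omega)|\ge |f(x)|/C_1$ on at least half of the shadow cap $S_x$) to obtain the \emph{pointwise} inequality $\phi(\delta f^*(\omega)/C_1)\le 2M\bigl(\phi(\delta|f|)\bigr)(\omega)$, and then invoke only the $L^2$-boundedness of $M$; no growth condition on $\psi$ is needed because the maximal operator is applied to the already-composed function $\phi(\delta|f|)$. This idea is absent from your proposal and is the essential missing ingredient.

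There are two further gaps. In your (3)$\Rightarrow$(4) step the Fubini bookkeeping fails: a fixed $\omega$ can lie in the caps $\Delta_r$ for a set of $r$ of full measure (take $f$ whose maximum on each sphere $|x|=r$ is attained along one fixed radius), so $\int_{\{r:\,\omega\in\Delta_r\}}(1-r)^{-1}\,dr$ diverges and the averaging argument does not close. The paper instead discretizes to dyadic radii, turns the left-hand side into $\int_{\Bn}\psi(\epsilon|f|)\,d\mu$ for a discrete Carleson measure $\mu$, and controls the overlap with a distribution-function argument based on a Whitney decomposition of the level sets $\{f^*>\lambda\}$ (Lemma \ref{carleson}). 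Finally, your closing of the loop is circular: the assertion that $\int_{\Sn}\psi(\delta|f(r\omega)|)\,d\sigma$ is ``comparable for all $r$ near $1$ up to a fixed constant and a fixed change of $\delta$'' is essentially the implication (2)$\Rightarrow$(1) itself, and Fatou only yields the inequality in the direction (1)$\Rightarrow$(2). The paper closes the loop as (2)$\Rightarrow$(3)$\Rightarrow$(1), where (3)$\Rightarrow$(1) is the genuinely trivial step; the reverse passage (4)$\Rightarrow$(2) is done by a direct modulus-of-curve-families estimate on $\sigma(\{|f(\omega)|>2\lambda/\delta\})$ in terms of $(1-r(\lambda))^{n-1}$, which you gesture at but do not supply.
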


The characterizations involving $a_f(x)$ and the component function $f_i$ are extended to the Hardy-Orlicz spaces that have a doubling condition on the growth function $\psi$ and its inverse.
\begin{theorem}\label{equivdoubling}
Let $f$ be a quasiconformal mapping of $\Bn$, $f_i$ one of its component functions and $\psi$ a growth function such that both $\psi$ and $\psi^{-1}$ are doubling. Then the following are equivalent:
\begin{enumerate}
\item $f(x) \in H^\psi$
\item $\int_{\Bn} \psi( a_f(x) (1-|x|)) \frac{dx}{1-|x|} < \infty$
\item $\psi(f_i^*(\omega)) \in L^1(\Sn)$
\end{enumerate}
\end{theorem}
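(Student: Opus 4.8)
The plan is to prove the chain of implications $(1)\Rightarrow(3)\Rightarrow(2)\Rightarrow(1)$ and separately $(1)\Leftrightarrow(2')$, where I will reduce everything as far as possible to Theorem~\ref{equivthm} and then absorb the doubling hypotheses only where genuinely needed. Since Theorem~\ref{equivthm} already gives $f\in H^\psi \iff \psi(\delta f^*(\omega))\in L^1(\Sn)$ for some $\delta>0$, the content of $(3)$ of the present theorem over what we already know is precisely the removal of the dilation constant: we must pass from $\psi(\delta f^*)\in L^1$ to $\psi(f^*)\in L^1$, and likewise for the component maximal function. This is exactly where the doubling condition on $\psi$ enters — if $\psi(2t)\le C\psi(t)$ then by iterating, $\psi(\lambda t)\le C_\lambda \psi(t)$ for every fixed $\lambda\ge 1$, so $\int\psi(\delta f^*)<\infty$ with $\delta$ possibly small is upgraded to $\int\psi(f^*)<\infty$ by writing $f^*=\delta^{-1}(\delta f^*)$ and applying the doubling bound with $\lambda=\delta^{-1}$. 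The same remark handles the passage between $\psi(\delta_i |f(\omega)|)\in L^1$ and $\psi(|f(\omega)|)\in L^1$, so in effect, under doubling of $\psi$, the existential constants in Theorem~\ref{equivthm} may all be taken equal to $1$.

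First I would record this normalization as the backbone: under the doubling hypothesis on $\psi$, statement $(1)$ is equivalent to $\psi(|f(\omega)|)\in L^1(\Sn)$ and to $\psi(f^*(\omega))\in L^1(\Sn)$ and to $\int_0^1(1-r)^{n-2}\psi(M(r,f))\,dr<\infty$. Next, for $(3)$, I would use the standard pointwise comparison between the maximal function of $f$ and the maximal functions of its components: $f_i^*(\omega)\le f^*(\omega)$ trivially, and conversely $f^*(\omega)\le \sum_i f_i^*(\omega)\le n\max_i f_i^*(\omega)$. Thus $\psi(f^*)\le \psi(n\max_i f_i^*)\le C\,\psi(\max_i f_i^*)\le C\sum_i\psi(f_i^*)$ using doubling of $\psi$ once more, while $\psi(f_i^*)\le\psi(f^*)$ is immediate. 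This gives $(1)\Leftrightarrow(3)$ cleanly. (A subtlety: the component $f_i$ is a monotone quasiconformal-type function but not itself quasiconformal, so one cannot directly invoke Theorem~\ref{equivthm} for $f_i$; the argument above sidesteps this by comparing $f_i^*$ with $f^*$ directly and never applying the equivalence theorem to $f_i$.)

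The real work is $(1)\Leftrightarrow(2)$, the area/energy characterization $\int_{\Bn}\psi\!\left(a_f(x)(1-|x|)\right)\frac{dx}{1-|x|}<\infty$. Here I expect to mirror the proof that $(1)\Leftrightarrow(6)$ in Theorem~A: one direction uses the pointwise estimate relating $a_f$ on the sphere $|x|=r$ to a difference quotient of $M(\cdot,f)$, together with the Gehring--Hayman type control and the quasiconformal distortion, to bound $\int_{|x|=r}\psi(a_f(x)(1-|x|))\,d\sigma$ by a constant times $(1-r)^{n-2}\psi(cM(r,f))$ plus error terms, and then integrates in $r$ and invokes the $M(r,f)$ characterization from Theorem~\ref{equivthm}; the reverse direction integrates the subharmonic-type estimate $M(r,f)\lesssim \int \cdots a_f \cdots$ back up. The doubling of $\psi^{-1}$ (equivalently, a lower-type / reverse-doubling bound on $\psi$, i.e. $\psi(2t)\ge (1+c)\psi(t)$) is what allows the Jensen-type inequalities in these integral estimates to be reversed: without it, passing $\psi$ through the averaging that defines $a_f$ and through the radial integration loses too much, and the two sides of the desired equivalence decouple. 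Concretely I anticipate needing an inequality of the form $\psi(a+b)\le C(\psi(2a)+\psi(2b))$ (from doubling of $\psi$) in one direction and a Jensen-with-$\psi^{-1}$ estimate of the form $\psi^{-1}\!\big(\fint \psi(g)\big)\gtrsim \fint g$ — valid when $\psi^{-1}$ is doubling, since that makes $\psi$ dominate a power — in the other.

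The main obstacle, then, is the careful bookkeeping of constants and error terms in the $(1)\Leftrightarrow(2)$ equivalence: in the $H^p$ proof the homogeneity of $t\mapsto t^p$ makes many terms collapse, whereas for general $\psi$ each application of a growth or monotonicity estimate introduces a dilation factor that must be absorbed by doubling, and one must be vigilant that the doubling constants do not compound in a way that depends on $r$ or on the point $x$. I would isolate a single clean lemma — roughly, "if $\psi$ is doubling then $\int_{\Bn}\psi(a_f(1-|x|))\frac{dx}{1-|x|}$ and $\int_0^1(1-r)^{n-2}\psi(M(r,f))\,dr$ are comparable up to multiplicative constants and fixed dilations of the argument of $\psi$" — prove it using the quasiconformal estimates from Section~2 exactly as in \cite{hpqc} but tracking dilations, and then close the loop with Theorem~\ref{equivthm}.
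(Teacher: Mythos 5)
There is a genuine gap in your treatment of $(3)\Rightarrow(1)$. Statement (3) asserts integrability of $\psi(f_i^*)$ for a \emph{single} component function $f_i$, whereas your comparison $f^*\le\sum_i f_i^*\le n\max_i f_i^*$ only yields $(1)$ if you control \emph{all} the components. This is not a bookkeeping issue: that one coordinate suffices is a genuinely quasiconformal phenomenon (it is item (4) of Theorem A), and it cannot be obtained from the trivial pointwise inequalities between $f^*$ and the $f_i^*$. The paper's argument uses Lemma \ref{firstmoduluscons}: $f(B_x)$ contains a ball of radius $d(f(x),\partial\Omega)/C$ centered at $f(x)$, so already the single coordinate $f_i$ oscillates by at least $d(f(x),\partial\Omega)/C$ on $B_x$, whence $d(f(x),\partial\Omega)\le C f_i^*(\omega)$ for every $x\in\Gamma(\omega)$. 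One then needs a separate mechanism (the paper's Lemma \ref{doublinglemma}, proved via a Whitney decomposition of $\{f^*>\lambda\}$ and the modulus estimate of Lemma \ref{Mineq}) to convert nontangential control of $d(f(x),\partial f(\Bn))$ into $\psi(|f(\omega)|)\in L^1(\Sn)$. That entire mechanism is absent from your proposal, and your parenthetical remark that you "sidestep" the non-quasiconformality of $f_i$ is exactly where the argument breaks.

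For $(1)\Leftrightarrow(2)$ you have a plan rather than a proof, and the plan diverges from what the paper actually does. Your proposed key lemma --- comparability of $\int_{\Bn}\psi(a_f(x)(1-|x|))\frac{dx}{1-|x|}$ with $\int_0^1(1-r)^{n-2}\psi(M(r,f))\,dr$ up to dilations --- is not established, and it is not clear how the Hardy-inequality manipulations that make this work for $t^p$ survive for general $\psi$; you yourself flag this as "the main obstacle" without resolving it. The paper takes a different route: for $(1)\Rightarrow(2)$ it proves (Lemma \ref{afcarleson}) that $d\mu=\frac{\psi(a_f(x)(1-|x|))}{\psi(|f(x)|)}\frac{dx}{1-|x|}$ is a Carleson measure --- this is where both doubling hypotheses enter, via the polynomial bound $\psi(a)/\psi(b)\le 2^p(a^p/b^p+a^{1/q}/b^{1/q})$, H\"older, a change of variables, Miniowitz's radial growth estimate, and a M\"obius rescaling for the local Carleson condition --- and then applies the Carleson embedding Lemma \ref{carleson}. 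For $(2)\Rightarrow(1)$ it shows $\psi\bigl(\sup_{x\in\Gamma(\omega)}a_f(x)(1-|x|)\bigr)\in L^1(\Sn)$ (Lemma \ref{supaf}) and again invokes Lemma \ref{doublinglemma} together with Lemma \ref{afequiv}. Your normalization observation (that doubling of $\psi$ lets one set all the $\delta$'s in Theorem \ref{equivthm} equal to $1$) is correct and useful, but by itself it carries only the easy implications.
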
 
The equivalences in Theorem \ref{equivdoubling} can fail if either $\psi$ or $\psi^{-1}$ is not doubling. For example, if $f(z) = z$ then $f$ belongs to every Hardy-Orlicz space $H^\psi$.  However, we can construct a growth function $\psi$ such that the integral $\int_{1/2}^1 \frac{\psi(1-r)}{1-r}dr$ diverges, thus failing the implication (1) $\Rightarrow$ (2) for this $f$. The inverse of the growth function from our example is not doubling. The implications (2) $\Rightarrow$ (1) and (3) $\Rightarrow$ (1) both fail for $f(z) = \log(z +1)$ and an appropriate growth function $\psi$ that is not doubling. See section 4 for details.

This paper is organized as follows. Section 2 includes notation, definitions and background lemmas necessary for the proofs of our main results. Section 3 focuses on the characterizations of Hardy-Orlicz spaces that hold for all growth functions, and Section 4 gives the results that hold for $\psi$ satisfying additional growth conditions. 
\section{Preliminaries}
We denote by $B^n(x,r)$ the open ball in $\Rn$ of radius $r$ centered at $x$ and write its boundary as $S^{n-1}(x,r)$. We abbreviate $B^n(0, 1) = \Bn$, $S^{n-1}(0,1) = \Sn$ and let $\omega_{n-1}$ denote the surface measure of $\Sn$. For each $x\in \Bn$ we set $B_x = B^n(x, (1-|x|)/2)$ and then define the cap $S_x = \{\frac{y}{|y|}: y \in B_x, y\neq 0 \}$. Given $\omega \in \Sn$ let
\begin{align*}
\Gamma(\omega) = \bigcup\{B_{t\omega} : 0 \leq t < 1\}
\end{align*}
be a Stolz cone at $\omega$. Clearly $x\in \Gamma(\omega)$ if and only if $\omega \in S_x$. 

When a constant is written as $C = C(a,b,...)$ it means that the value of $C$ depends only on the values of $a, b, ...$. The values of constants may change from line to line in a sequence of inequalities without explicit mention or special notation. We use the symbol $A \approx B$ to denote that there exists a constant $C$ such that
\begin{align*}
\frac{A}{C} \leq B \leq CA.
\end{align*}

A homeomorphism of a domain $\Omega$ in $\Rn$ into $\Rn$ is $K$-quasiconformal if $f$ belongs to the Sobolev class $W_{loc}^{1,n}(\Omega, \Rn)$ and $|Df(x)|^n \leq KJ_f(x)$ for almost every $x \in \Omega$. In this paper all quasiconformal mappings will have as domain $\Bn$. 

The quasiconformal analogue of Beurling's theorem, \cite[Theorem~4.4]{conformalmetrics}, says that given a quasiconformal mapping $f: \Bn \rightarrow \Rn$ the radial limit
\begin{eqnarray*}
f(\omega):= \lim_{r\rightarrow 1} f(r\omega)
\end{eqnarray*}
exists for almost every $\omega \in \mathbb{S}^{n-1}$. 

An important tool for us will be the modulus of curve families. Given a family of curves $\Gamma$ in $\Rn$ the modulus Mod($\Gamma) \in [0, \infty]$ is
\begin{eqnarray*}
\text{Mod}(\Gamma) = \inf \int_{\Rn} \rho^n\; dm,
\end{eqnarray*}
where the infimum is taken over all admissible Borel functions $\rho:\Rn\rightarrow [0,\infty]$. A non-negative Borel function $\rho$ on $\Rn$ is considered admissible if ${\int_\gamma \rho ds \geq 1}$ for each locally rectifiable $\gamma \in \Gamma$. We collect some basic results about modulus of curve families here; their proofs can be found in \cite{vaisala}. 

First, the modulus of a curve family is quasi-invariant under quasiconformal mappings. More precisely, if $f: \Omega \rightarrow \Rn$ is $K$-quasiconformal then ${\text{Mod}(\Gamma)/K \leq \text{Mod}(f\Gamma) \leq K\text{Mod}(\Gamma)}$ for every family of curves $\Gamma \subset \Omega$. Here $f\Gamma = \{f\circ \gamma : \gamma \in \Gamma \}$.

The exact value of the modulus can be calculated for certain families of curves. Let $\Gamma$ be the collection of radial segments joining $S(0, r), 0 < r < 1$, to a Borel set $E \subset \Sn$. Then
\begin{align*}
\text{Mod}(\Gamma) = \sigma(E)(\log(1/r))^{1-n}.
\end{align*}
Moreover, if $\Gamma$ is a family of curves with each $\gamma \in \Gamma$ joining $S^{n-1}(x, r)$ to $S^{n-1}(x, R)$, $0 < r < R$, then we have the upper bound
\begin{align*}
\text{Mod}(\Gamma) \leq \frac{\omn}{(\log(R/r))^{n-1}}.
\end{align*}

We will make repeated use of the following lemma (cf. \cite[Theorem 18.1] {vaisala}), which is a direct result of modulus estimates. 
\begin{lemma}\label{firstmoduluscons}
Let $f: \Bn \rightarrow \Omega$ be $K$-quasiconformal. There is a constant $C$ depending only on $n$ and $K$ so that for each $x \in \Bn$
\begin{eqnarray*}
\textnormal{diam}(f(B_x))/C \leq d(f(x), \partial \Omega) \leq C \textnormal{diam}(f(B_x)).
\end{eqnarray*}
Moreover, $f(B_x)$ contains a ball of radius $d(f(x), \partial \Omega)/C$, centered at $f(x)$. 
\end{lemma}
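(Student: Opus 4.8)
The plan is to derive both the upper and lower bounds on $d(f(x),\partial\Omega)$ by comparing suitable curve families in $\Bn$ with their images under $f$, using the quasi-invariance of modulus together with the two explicit modulus estimates recalled just above (the exact value for radial curves and the upper bound $\omn/(\log(R/r))^{n-1}$ for spherical ring curves). Fix $x\in\Bn$ and write $t=1-|x|$, so that $B_x=B^n(x,t/2)$. The geometry to exploit is that $B_x$ is comparably far from $\partial\Bn$ and has comparable diameter: $B^n(x,t/4)\subset B_x\subset B^n(x,t/2)\subset \Bn$, with all these radii comparable to $t$.

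For the lower bound $d(f(x),\partial\Omega)\ge \mathrm{diam}(f(B_x))/C$, I would take $\Gamma$ to be the family of curves joining $S^{n-1}(x,t/2)$ to $S^{n-1}(x, 1)$ inside the spherical ring, i.e. curves that must exit $\Bn$; more carefully, use the family connecting $\overline{B_x}$ to $\partial\Bn$ inside $\Bn$. Its modulus is bounded above by a constant $c(n)$ because $\log(\text{outer}/\text{inner})$ is bounded below (the radii are comparable). The image family $f\Gamma$ consists of curves joining $\overline{f(B_x)}$ to $\partial\Omega$ in $\Omega$; if $d(f(x),\partial\Omega)$ were very small compared to $\mathrm{diam}(f(B_x))$, one could find such curves forced through a thin ring of large modulus around a boundary point, contradicting $\mathrm{Mod}(f\Gamma)\le K\,\mathrm{Mod}(\Gamma)\le K c(n)$. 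Making this precise gives $d(f(x),\partial\Omega)\ge \mathrm{diam}(f(B_x))/C$.

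For the upper bound, and for the final assertion that $f(B_x)$ contains a ball of radius $d(f(x),\partial\Omega)/C$ about $f(x)$, I would argue in the other direction: take $\Gamma'$ to be the radial curves from $S^{n-1}(x,\rho)$ to $S^{n-1}(x,t/4)$ for a small $\rho$, whose modulus is $\approx \omn/(\log((t/4)/\rho))^{n-1}$; since there is a genuine ring $B^n(x,t/4)\setminus\{x\}$ available inside $\Bn$, the image $f(B^n(x,t/4))$ contains a ring separating $f(x)$ from the complement of $f(B_x)$, and quasi-invariance forces $f(B_x)$ to contain a definite ball about $f(x)$ of radius $\gtrsim \mathrm{diam}(f(B_x))$, hence (combined with the lower bound just proved) of radius $\gtrsim d(f(x),\partial\Omega)$. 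Symmetrically, a ring in $\Omega$ separating $f(x)$ from $\partial\Omega$ of modulus $\approx \omn/(\log(d(f(x),\partial\Omega)/\mathrm{diam} f(B_x)))^{n-1}$ pulls back to a ring in $\Bn$ that must fit around $x$ inside the bounded region $B^n(x,t/2)$, and the boundedness of its modulus from below caps $\log(d(f(x),\partial\Omega)/\mathrm{diam} f(B_x))$, giving $d(f(x),\partial\Omega)\le C\,\mathrm{diam}(f(B_x))$.

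The main obstacle is the bookkeeping for the ring/condenser arguments: one must choose the separating curve families so that their moduli are simultaneously controlled above by a fixed constant and below by an explicit decreasing function of the relevant ratio, and this requires knowing that the relevant continua (the image of $\overline{B_x}$, a boundary component of $\Omega$) are non-degenerate — which is exactly what connectedness of $f(B_x)$ and of $\partial\Omega$ (or of its relevant component) provides. Once the families are set up correctly, the estimates follow mechanically from $\mathrm{Mod}(\Gamma)/K\le \mathrm{Mod}(f\Gamma)\le K\,\mathrm{Mod}(\Gamma)$ and the two quoted modulus formulas; indeed this is essentially Theorem 18.1 of \cite{vaisala} specialized to the balls $B_x$, so I would cite that and only indicate the comparisons above.
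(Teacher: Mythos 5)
The paper offers no proof of this lemma at all: it simply points to V\"ais\"al\"a's Theorem 18.1 as ``a direct result of modulus estimates,'' which is exactly what you do, and your sketch of the underlying ring/curve-family argument is the standard route to that theorem. The one ingredient your sketch glosses over is that the contradiction in the lower-bound direction (and the ball-containment claim) needs a Loewner-type \emph{lower} bound for the modulus of the family joining two nondegenerate continua, which is not among the two modulus facts the paper quotes in Section 2 and would have to be imported from \cite{vaisala} as well -- but since both you and the author ultimately defer to the cited theorem, this is harmless.
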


The following lemmas are proved similarly using modulus estimates. 
\begin{lemma}\label{Mineq}
Let $f$ be a $K$-quasiconformal mapping of $\Bn$ onto $\Omega \subseteq \Rn$. For each $x\in \Bn$ and $M > 1$
\begin{eqnarray*}
\sigma(\{\omega \in S_x : |f(\omega) - f(x)| > M d(f(x), \partial\Omega) \}) \leq C \sigma(S_x)(\log M) ^{1-n},
\end{eqnarray*}
where the constant $C$ depends only on $n,K$. 
\end{lemma}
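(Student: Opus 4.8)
The plan is to bound the bad set of directions by a curve-family modulus estimate. Fix $x \in \Bn$ and $M>1$, and let $E_M = \{\omega \in S_x : |f(\omega)-f(x)| > M\,d(f(x),\partial\Omega)\}$. The idea is to connect, inside the image, a small sphere around $f(x)$ to a large one, and pull back the resulting ``long thin'' curve family. Concretely, by Lemma \ref{firstmoduluscons}, $f(B_x)$ contains the ball $B^n(f(x), d(f(x),\partial\Omega)/C_0)$ for a constant $C_0 = C_0(n,K)$. Let $\Gamma$ be the family of curves in $\Bn$ joining $S^{n-1}(x,(1-|x|)/2)$ to $E_M$ within the truncated cone region, or more simply the family of curves joining $\partial B_x$ to points of $E_M$ along near-radial segments; the point is that each $\gamma \in \Gamma$, after applying $f$, starts inside $B^n(f(x), d(f(x),\partial\Omega)/C_0)$ and must reach distance $M\,d(f(x),\partial\Omega)$ from $f(x)$, hence $f\gamma$ joins $S^{n-1}(f(x), d(f(x),\partial\Omega)/C_0)$ to $S^{n-1}(f(x), M\,d(f(x),\partial\Omega))$.

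The key steps, in order, are: (i) set up $\Gamma$ as the radial-type curve family from $S(0,|x|+(1-|x|)/2)$ -- equivalently from $\partial B_x$ -- out to the cap $E_M \subset S_x$, so that the exact modulus formula for radial segments gives a lower bound $\text{Mod}(\Gamma) \geq c(n)\,\sigma(E_M)\,(\log(1/(1-|x|)))^{1-n}$, or better, using that the curves only traverse the corona between $\partial B_x$ and $\Sn$, a bound of the form $\text{Mod}(\Gamma) \gtrsim \sigma(E_M)/\sigma(S_x)$ after accounting for the geometry of the cap (since $\sigma(S_x) \approx (1-|x|)^{n-1}$ and the relevant logarithmic factor over this short corona is comparable to a constant); (ii) apply quasi-invariance of modulus to get $\text{Mod}(f\Gamma) \geq \text{Mod}(\Gamma)/K$; (iii) apply the stated upper bound for curve families joining two concentric spheres of radii $r = d(f(x),\partial\Omega)/C_0$ and $R = M\,d(f(x),\partial\Omega)$, yielding $\text{Mod}(f\Gamma) \leq \omn (\log(C_0 M))^{1-n} \leq C(n)(\log M)^{1-n}$ for $M$ bounded away from $1$; (iv) combine to obtain $\sigma(E_M) \leq C\,\sigma(S_x)(\log M)^{1-n}$.

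The main obstacle is step (i): one must make precise the lower modulus bound for the curve family connecting $\partial B_x$ to the cap $E_M$ on $\Sn$, so that it genuinely scales like $\sigma(E_M)/\sigma(S_x)$ (up to a dimensional constant) rather than degenerating. This requires care because the region swept is not a full spherical corona but the portion of $\Bn \setminus B_x$ ``above'' the cap $S_x$; one handles it by comparing with the radial-segment family joining $S^{n-1}(0, 1-c(1-|x|))$ to $E_M$ and using that $\log(1/(1-c(1-|x|)))^{1-n} \approx (1-|x|)^{1-n} \approx \sigma(S_x)^{-1}$, together with the fact that the cap $S_x$ has $\sigma(S_x) \approx (1-|x|)^{n-1}$ by elementary spherical geometry. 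Once the two-sided comparison $\text{Mod}(\Gamma) \approx \sigma(E_M)/\sigma(S_x)$ is in hand, the rest is the routine chain of inequalities above. Since for small $M - 1$ the claimed bound is trivial (the left side is at most $\sigma(S_x)$ and $(\log M)^{1-n}$ is large), we may assume $M \geq 2$, which also absorbs the constant $C_0$ inside the logarithm.
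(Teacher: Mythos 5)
Your proposal follows essentially the same route as the paper: the radial-segment family from a sphere near $B_x$ out to the bad cap $E_M$, the exact modulus formula giving $\mathrm{Mod}(\Gamma)\approx\sigma(E_M)/\sigma(S_x)$, quasi-invariance, and the ring upper bound for the image family, with the range $1<M\leq C^2$ handled trivially. One small correction: the image curves start within distance $C\,d(f(x),\partial\Omega)$ of $f(x)$ because of the \emph{upper} bound $\textnormal{diam}(f(B_x))\leq C\,d(f(x),\partial\Omega)$ in Lemma \ref{firstmoduluscons}, not because $f(B_x)$ contains a ball of radius $d(f(x),\partial\Omega)/C_0$ (that containment says nothing about where $f(\partial B_x)$ lands), so the inner sphere has radius $Cd$ rather than $d/C_0$ and one needs $M>C^2$ (not merely $M\geq 2$) in the main case --- which your trivial-case reduction already covers.
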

\begin{proof}
Abbreviate $d = d(f(x), \partial\Omega)$ and $E = \{\omega \in S_x : |f(\omega) - f(x)| > M d\}$ and first assume that $|x| < 1/4$. Let $\Gamma_E$ be the set of radial segments with one endpoint in $E$ and the other in $B_x \cap S(0,1/4)$. Then Mod$(\Gamma_E) = \sigma(E)(\log 4)^{1-n}$ and by Lemma \ref{firstmoduluscons} there exists a constant $C = C(n, K)$ such that each curve in $f(\Gamma_E)$ joins $S(f(x), Cd)$ with $S(f(x), Md)$. If $2 \leq C$ and $C^2 < M$ then
\begin{eqnarray*}
\sigma(E)(\log 4)^{1-n} = \textnormal{Mod}(\Gamma_E) \leq K\textnormal{Mod}(f(\Gamma_E)) \leq C\omn (\log M)^{1-n}
\end{eqnarray*}
and if $1 < M \leq C^2$ then we have anyway
\begin{eqnarray*}
\sigma(E) \leq \omn(\log C^2)^{n-1}(\log M)^{1-n}.
\end{eqnarray*}
For the other case when $1/4 \leq |x|$ set $\Gamma_E$ to be the collection of radial segments with one endpoint in $E$ and the other endpoint in $B_x \cap S(0, |x|)$. Then Mod$(\Gamma_E) = \sigma(E)(\log \frac{1}{|x|})^{1-n}$ and as above each curve in $f(\Gamma_E)$ joins $S(f(x), Cd)$ with $S(f(x), Md)$. Calculating like before, the case $1 < M \leq C^2$ is trivial and assuming $C^2 < M$ we get that
\begin{eqnarray*}
\sigma(E)(\log 1/|x|)^{1-n} \leq C \omn (\log M)^{1-n}.
\end{eqnarray*}
Noting that $(\log 1/|x|)^{n-1} \approx \sigma(S_x)$ we are done. 
\end{proof}

\begin{lemma}\label{lemma42restatement}
Let $\func$ be $K$-quasiconformal with $f(x) 
\neq 0$ for all $x\in \Bn$, $\phi$ a growth function and $\delta >0$.  There is an absolute constant $C$ such that for each $x \in \Bn$ and $M > 1,$
\begin{eqnarray*}
\sigma(\{\omega \in S_x : \phi(\delta|f(\omega)|) < \phi(\delta|f(x)|/M)\}) \leq C\sigma(S_x)(\log M)^{-1}.
\end{eqnarray*}
\end{lemma}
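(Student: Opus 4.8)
The plan is to derive this from Lemma~\ref{Mineq} by means of an inversion. Since $\phi$ is strictly increasing, the inequality $\phi(\delta|f(\omega)|)<\phi(\delta|f(x)|/M)$ is equivalent to $|f(\omega)|<|f(x)|/M$, so $\phi$ and $\delta$ are irrelevant and it suffices to bound $\sigma(E)$ for
\[
E:=\{\omega\in S_x:\ |f(\omega)|<|f(x)|/M\}.
\]
Put $\Omega=f(\Bn)$; the hypothesis that $f$ does not vanish on $\Bn$ says precisely that $0\notin\Omega$. We may also assume $M\geq 3$, since for $1<M<3$ the asserted bound follows at once from $\sigma(E)\leq\sigma(S_x)$ and $(\log M)^{-1}>(\log 3)^{-1}$.

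Let $I(z)=z/|z|^2$ be the inversion in the unit sphere, a Möbius (hence $1$-quasiconformal) self-map of $\overline{\Rn}$ interchanging $0$ and $\infty$. Because $\Omega\subset\Rn\setminus\{0\}$, the composition $g:=I\circ f$ is a $K$-quasiconformal map of $\Bn$ onto $\Omega':=I(\Omega)\subset\Rn\setminus\{0\}$. The point of the inversion is that it turns small values of $f$ into large values of $g$: for every $y\in\Bn$ one has $|g(y)|=1/|f(y)|$, and, by continuity of $I$ on $\overline{\Rn}$, at every $\omega\in\Sn$ for which the radial limit $f(\omega)$ exists the radial limit $g(\omega)=I(f(\omega))$ exists as well and satisfies $|g(\omega)|=1/|f(\omega)|$ (with $1/0:=\infty$, which can occur only on $\Sn$ since $f$ does not vanish in $\Bn$). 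Hence $E=\{\omega\in S_x:\ |g(\omega)|>M|g(x)|\}$.

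To put this in the form required by Lemma~\ref{Mineq} I would replace $|g(\omega)|$ by $|g(\omega)-g(x)|$. Since $0\in\Rn\setminus\Omega'$ and $g(x)\in\Omega'$, we have $d(g(x),\partial\Omega')\leq|g(x)|$, and therefore for every $\omega\in E$
\[
|g(\omega)-g(x)|\ \geq\ |g(\omega)|-|g(x)|\ >\ (M-1)|g(x)|\ \geq\ (M-1)\,d(g(x),\partial\Omega'),
\]
the estimate being trivially true when $|g(\omega)|=\infty$. Applying Lemma~\ref{Mineq} to the $K$-quasiconformal map $g$ with $M-1>1$ in place of $M$ now yields
\[
\sigma(E)\ \leq\ \sigma\bigl(\{\omega\in S_x:\ |g(\omega)-g(x)|>(M-1)\,d(g(x),\partial\Omega')\}\bigr)\ \leq\ C\,\sigma(S_x)\,(\log(M-1))^{1-n}.
\]
To finish, note that $M-1\geq M^{1/2}$ for $M\geq 3$, so $(\log(M-1))^{1-n}\leq 2^{\,n-1}(\log M)^{1-n}\leq 2^{\,n-1}(\log M)^{-1}$, using $n\geq 2$ and $\log M\geq 1$; enlarging $C$ to absorb the factor $2^{\,n-1}$ (and the constant from the case $M<3$) gives the claim.

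The substance of the argument is the single observation that inversion converts the present statement into Lemma~\ref{Mineq}; everything after it is routine. A direct modulus estimate is possible but clumsier: because $d(f(x),\partial\Omega)$ may be comparable to $|f(x)|$, one is forced into a case split, with one parameter range handled by Lemma~\ref{Mineq} as above and the other by a curve-family argument joining $E$ to $\partial B_x$ — exploiting that the image curves, although they no longer start near a point close to the origin, still start at distance $\gtrsim d(f(x),\partial\Omega)$ from $0$. I would avoid all of this via the inversion.
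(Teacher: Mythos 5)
Your proof is correct, and it takes a genuinely different route from the paper. The paper proves the lemma directly, by running the modulus-of-curve-families argument of Lemma~\ref{Mineq} once more on the family $\Gamma_E$ of radial segments ending in $E=\{\omega\in S_x:|f(\omega)|<|f(x)|/M\}$, now locating the image curves relative to the origin and to $f(x)$ (using $\mathrm{diam}\,f(B_x)\le Cd(f(x),\partial\Omega)\le C|f(x)|$, which is where the hypothesis $0\notin f(\Bn)$ enters). You instead conjugate by the inversion $I(z)=z/|z|^2$ and quote Lemma~\ref{Mineq} as a black box for $g=I\circ f$; the same hypothesis $0\notin f(\Bn)$ enters through the single inequality $d(g(x),\partial\Omega')\le|g(x)|$. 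All the checks are in order: $g$ is $K$-quasiconformal since $I$ is M\"obius, $E$ is contained in $\{\omega\in S_x:|g(\omega)-g(x)|>(M-1)\,d(g(x),\partial\Omega')\}$ (including the null-set contingency $f(\omega)=0$, which you correctly observe still forces the image curve out of every ball around $g(x)$), and the elementary reductions for $1<M<3$ and $M-1\ge\sqrt M$ are fine. What your route buys is twofold: it avoids the delicate positioning of the separating annulus that the direct argument must negotiate when $d(f(x),\partial\Omega)$ is comparable to $|f(x)|$ (precisely the case split you allude to at the end), and it actually delivers the stronger bound $C\sigma(S_x)(\log M)^{1-n}$, of which the stated $(\log M)^{-1}$ estimate is a weakening for $n\ge 2$ and $M\ge e$. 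Since the lemma is only used downstream to make the exceptional set smaller than $\sigma(S_x)/2$ for large $M$, either exponent suffices, but your argument is the cleaner of the two.
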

\begin{proof}
Let $x \in \Bn$ and $\omega \in S_x$. It suffices to prove the inequality in the instance that $\delta = 1$ and $\phi$ is the identity map on $[0, \infty]$, since growth functions are strictly increasing. 

Set $E = \{\omega \in S_x :  |f(\omega)| < |f(x)|/M\}$, and choose the curve families $\Gamma_E$ like in the proof of Lemma~\ref{Mineq}, taking separately the cases $|x| \leq 1/4$ and $1/4 < |x| < 1$. Each curve belonging to $f(\Gamma_E)$ will have one endpoint in $B(f(x), C|f(x)|)$ and the other in {$\mathbb{R}^n \setminus B(f(x), |f(x)|/M)$}, for some absolute constant $C$. The desired upper bound follows using the same modulus of curve family techniques as in the proof of Lemma~\ref{Mineq}. 
\end{proof}

The average derivative of a $K$-quasiconformal mapping, introduced by Astala and Gehring \cite{analogues}, is defined as
\begin{align*}
a_f(x) = \exp(\int_{B_x}\log J_f(y) \frac{dm}{n|B_x|}).
\end{align*}
Here $|B_x|$ is the $n$-measure of $B_x$. The mean value property implies that $a_f = |Df|$ if $f$ is conformal. The following lemma is proved in \cite{analogues}, and it is an example of how $a_f(x)$ can take the place of $|f'(x)|$ in quasiconformal analogues of statements originally proved for conformal mappings on the unit disk.
\begin{lemma}\label{afequiv}
Let $f: \Bn \rightarrow \Omega$ be $K$-quasiconformal. There is a constant $C$ depending only on $n,K$ so that for each $x \in \Bn$
\begin{align*}
d(f(x), \partial\Omega)/C \leq a_f(x)(1-|x|)\leq C d(f(x), \partial\Omega)
\end{align*}
and
\begin{align*}
\frac{1}{C}\left(\frac{1}{|B_x|}\int_{B_x} |Df(y)|^ndm \right)^{1/n} \leq a_f(x) \leq C\left(\frac{1}{|B_x|}\int_{B_x} |Df(y)|^ndm \right)^{1/n}.
\end{align*}
\end{lemma}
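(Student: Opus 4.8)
The plan is to pin $a_f(x)^n$ to the average of $J_f$ over $B_x$ and then convert that average into $d(f(x),\partial\Omega)^n$ via Lemma~\ref{firstmoduluscons}; the first displayed chain then follows from the second. For the easy halves, note that $J_f\le|Df|^n$ a.e.\ and $\exp$ is convex, so Jensen's inequality gives $a_f(x)^n=\exp\!\big(\tfrac1{|B_x|}\int_{B_x}\log J_f\,dm\big)\le\tfrac1{|B_x|}\int_{B_x}|Df|^n\,dm$, and the distortion inequality $|Df|^n\le KJ_f$ then gives $a_f(x)^n\le\tfrac1{|B_x|}\int_{B_x}|Df|^n\,dm\le\tfrac{K}{|B_x|}\int_{B_x}J_f\,dm$.

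The substance is the reverse inequality $\tfrac1{|B_x|}\int_{B_x}J_f\,dm\le C(n,K)\,a_f(x)^n$, a reverse Jensen (geometric-mean versus arithmetic-mean) estimate for the weight $J_f$ on $B_x$. I would obtain it from Gehring's higher-integrability theorem: the Jacobian of a $K$-quasiconformal map satisfies a reverse H\"older inequality with exponent $>1$ and constant depending only on $n,K$ on any ball whose concentric double lies in the domain. Since $2B_x\subset\Bn$ this applies to $B_x$, and a reverse H\"older inequality makes $J_f$ an $A_\infty$-weight with constants controlled by $n,K$; for $A_\infty$-weights the average is bounded by a constant times the geometric mean, which is exactly what is needed. (This also secures $\log J_f\in L^1(B_x)$, so $a_f(x)$ is well defined.) Combining with the previous paragraph, $a_f(x)^n\approx\tfrac1{|B_x|}\int_{B_x}|Df|^n\,dm\approx\tfrac1{|B_x|}\int_{B_x}J_f\,dm$, which is the second displayed estimate.

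For the first estimate, write $\tfrac1{|B_x|}\int_{B_x}J_f\,dm=|f(B_x)|/|B_x|$ and recall $|B_x|\approx(1-|x|)^n$ with constant depending only on $n$. By Lemma~\ref{firstmoduluscons}, $f(B_x)$ contains a ball centred at $f(x)$ of radius $d(f(x),\partial\Omega)/C$, so $|f(B_x)|\gtrsim d(f(x),\partial\Omega)^n$; and $\operatorname{diam}f(B_x)\le C\,d(f(x),\partial\Omega)$ forces $f(B_x)$ into a ball of that radius, so $|f(B_x)|\lesssim d(f(x),\partial\Omega)^n$. Hence $a_f(x)^n\approx|f(B_x)|/|B_x|\approx d(f(x),\partial\Omega)^n/(1-|x|)^n$, and taking $n$-th roots gives $a_f(x)(1-|x|)\approx d(f(x),\partial\Omega)$, all constants depending only on $n,K$.

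The main obstacle is the reverse Jensen inequality of the second paragraph; it rests on Gehring's self-improving property of quasiconformal Jacobians, and one has to be careful that the estimate is available on $B_x$ itself and not just on a smaller concentric ball --- this is precisely the slack provided by $2B_x\subset\Bn$. Everything else reduces to Jensen's inequality, the pointwise distortion bound, and Lemma~\ref{firstmoduluscons}.
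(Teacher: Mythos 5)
Your argument is correct, and it is essentially the standard proof: the paper itself does not prove this lemma but cites Astala--Gehring, where the same two ingredients appear --- the Koebe-type distortion estimate (Lemma~\ref{firstmoduluscons}) to identify $\tfrac{1}{|B_x|}\int_{B_x}J_f\,dm=|f(B_x)|/|B_x|$ with $d(f(x),\partial\Omega)^n/(1-|x|)^n$, and a reverse Jensen inequality for $J_f$, which in the original source comes from Reimann's theorem that $\log J_f$ lies in BMO with norm depending only on $n,K$ (equivalent, via John--Nirenberg, to the $A_\infty$/reverse H\"older property you invoke). The one point worth making explicit is that Gehring's lemma as usually stated gives only a \emph{weak} reverse H\"older inequality (average over $B$ controlled by the $L^n$-average over $2B$), so to conclude the $A_\infty$-type comparison of arithmetic and geometric means on $B_x$ itself you should either cite the fact that weak reverse H\"older classes still yield this comparison, or pass directly through the BMO formulation; with that reference supplied, the proof is complete.
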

The following is a consequence of Lemmas \ref{firstmoduluscons} and \ref{afequiv}. See \cite[Lemma 2.5]{hpqc} for details.
\begin{lemma}\label{afintegralequiv}
Let $f: \Bn \rightarrow \Omega$ be $K$-quasiconformal. Suppose that $u > 0$ satisfies
\begin{align*}
u(x) \approx u(y)
\end{align*}
for each $x\in \Bn$ and all $y\in B_x$. Let $0 < q \leq n$ and $p\geq q$. Then
\begin{align*}
\int_{\Bn} a_f^p(x) u(x) dx \approx \int_{\Bn} a_f(x)^{p-q}|Df(x)|^qu(x)dx.
\end{align*}
\end{lemma}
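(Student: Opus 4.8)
The plan is to compare the two integrals pointwise-in-average over each ball $B_x$ and then glue the local estimates together, which is the standard way such ``Hölder-on-balls plus Vitali cover'' statements are proved. The key observation is that since $p \geq q$ we may split $a_f^p = a_f^{p-q}\cdot a_f^q$, so it suffices to show that
\begin{align*}
\int_{B_x} a_f^q(y)\,dy \approx \int_{B_x} |Df(y)|^q\,dy
\end{align*}
with constants depending only on $n, K, q$, after which multiplying through by the slowly varying weight $u$ (which by hypothesis is comparable to a constant on $B_x$) and summing over a suitable covering of $\Bn$ by balls of bounded overlap will give the claim. Actually it is cleaner to argue via the comparison $a_f(y) \approx a_f(x)$ for $y \in B_x$: this holds because $B_y$ and $B_x$ are comparable balls and $d(f(y),\partial\Omega) \approx d(f(x),\partial\Omega)$ by Lemma~\ref{firstmoduluscons} together with the first estimate in Lemma~\ref{afequiv}. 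Hence $a_f$ itself is a ``slowly varying'' weight in exactly the sense the hypothesis imposes on $u$, so $a_f^{p-q}u$ is slowly varying too.

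With that in hand the two sides reduce, up to the multiplicative constants absorbed into $\approx$, to comparing $\int_{B_x} a_f^q(x) u(x)\,dx$ with $\int_{B_x} a_f^{p-q}(x) u(x) |Df(y)|^q\,dy$, i.e.\ to comparing $a_f^q(x)|B_x|$ with $\int_{B_x}|Df(y)|^q\,dy$. The upper bound $\int_{B_x}|Df(y)|^q dy \lesssim a_f^q(x)|B_x|$ follows from $0 < q \leq n$, Hölder's inequality (raising to the power $n/q \geq 1$), and the second estimate in Lemma~\ref{afequiv}:
\begin{align*}
\frac{1}{|B_x|}\int_{B_x}|Df(y)|^q\,dy \leq \left(\frac{1}{|B_x|}\int_{B_x}|Df(y)|^n\,dy\right)^{q/n} \lesssim a_f^q(x).
\end{align*}
For the reverse inequality one uses the lower bound in Lemma~\ref{firstmoduluscons}: $f(B_x)$ contains a ball of radius $r_x := d(f(x),\partial\Omega)/C$ centered at $f(x)$, so $|Df|$ cannot be too small throughout $B_x$ in an integrated sense. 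Concretely, there is a ball $B' \subset B_x$ of radius comparable to $1-|x|$ on which $f$ maps onto a set of diameter $\gtrsim r_x$ (apply Lemma~\ref{firstmoduluscons} once more to $B'$, or use the modulus estimate directly on a spherical ring), and then the length–area / Sobolev estimate $\int_{B'}|Df|\,dm \gtrsim \operatorname{diam}(f(B'))\cdot (\text{radius of }B')^{n-1}$ combined with Hölder gives $\int_{B_x}|Df(y)|^q\,dy \gtrsim r_x^q (1-|x|)^{n-q} \approx (a_f(x)(1-|x|))^q(1-|x|)^{n-q} \approx a_f^q(x)|B_x|$, using the first part of Lemma~\ref{afequiv} again.

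Assembling: choose a Whitney-type cover $\{B_{x_j}\}$ of $\Bn$ with the $B_{x_j}$ of bounded overlap (such covers exist because the radii $(1-|x_j|)/2$ are comparable to distances to $\partial\Bn$), note $u$ and $a_f^{p-q}$ are each comparable to their values at $x_j$ on $B_{x_j}$, apply the local two-sided estimate above on each $B_{x_j}$, and sum. The bounded overlap makes the sum of the local integrals comparable to the global integral on each side. I expect the main obstacle to be the \emph{lower} bound $\int_{B_x}|Df|^q \gtrsim a_f^q(x)|B_x|$: one must genuinely use that $f$ is quasiconformal (not merely Sobolev) to know $f(B_x)$ is large, and one has to be careful that the ball $B'$ on which $f$ expands can be taken with radius a fixed fraction of $1-|x|$ so that the powers of $(1-|x|)$ bookkeep correctly — this is where the modulus estimates from the preliminaries, and the ``contains a ball of radius $d(f(x),\partial\Omega)/C$'' clause of Lemma~\ref{firstmoduluscons}, do the real work. (This is the argument of \cite[Lemma 2.5]{hpqc}, to which we refer for the routine details.)
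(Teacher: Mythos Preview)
The paper does not actually prove this lemma; it simply records it as a consequence of Lemmas~\ref{firstmoduluscons} and~\ref{afequiv} and defers to \cite[Lemma~2.5]{hpqc}. Your overall architecture---observe that $a_f$ and $u$ are both slowly varying on each $B_x$, reduce to the local comparison $a_f(x)^q|B_x|\approx\int_{B_x}|Df|^q$, and sum over a Whitney cover with bounded overlap---is exactly the standard route and matches what the cited reference does. The upper bound via H\"older and the second estimate of Lemma~\ref{afequiv} is fine.

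The lower bound, however, has a genuine soft spot. The ``length--area / Sobolev estimate'' $\int_{B'}|Df|\,dm \gtrsim \operatorname{diam}(f(B'))\cdot(\text{radius of }B')^{n-1}$ is \emph{not} a general Sobolev fact (it fails for non-quasiconformal maps whose images are long and thin), and even if you grant it for quasiconformal $f$, passing from an $L^1$ lower bound to an $L^q$ lower bound by ``H\"older'' only works when $q\ge 1$, whereas the lemma allows $0<q\le n$. So as written the argument does not cover the full range of $q$.

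The clean fix is to bypass the length--area detour entirely and use Jensen's inequality directly on the definition of $a_f$. Since $J_f^{1/n}\le |Df|\le K^{1/n}J_f^{1/n}$ a.e., one has for every $q>0$
\[
a_f(x)^q \;=\; \exp\!\Bigl(\frac{1}{|B_x|}\int_{B_x}\log J_f^{q/n}\Bigr)
\;\le\; \frac{1}{|B_x|}\int_{B_x} J_f^{q/n}
\;\le\; \frac{1}{|B_x|}\int_{B_x} |Df|^q,
\]
the first inequality being Jensen for the convex exponential. This gives the missing direction in one line, for all $q>0$, and uses only what is already on the table.
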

\section{Characterizations of $H^\psi$}
With each quasiconformal mapping $f:\Bn \rightarrow \Rn$ we associate its maximum modulus function 
\begin{align*}
M(r,f) = \{\sup |f(x)| : |x| = r \}, \;\;\;r \in (0, 1).
\end{align*}
\begin{theorem}\label{firstequiv} 
Let $f$ be a $K$-quasiconformal mapping of $\Bn$ and $\psi$ a growth function. Then the following are equivalent:
\begin{eqnarray}\label{Lpsicond}
\psi(\delta_1 |f(\omega)|)\in L^1(\Sn)\;\; \text{for some $\delta_1 >0$}.
\end{eqnarray}
\begin{eqnarray}\label{maxfunchar}
\int_0^1 (1-r)^{n-2}\psi(\delta_2 M(r,f)) dr < \infty\;\; \text{for some $\delta_2 >0$}.
\end{eqnarray}
\end{theorem}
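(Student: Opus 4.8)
My plan is to establish the two implications separately, after a normalization. Set $\Omega:=f(\Bn)$; since $\Bn$ and $\Rn$ are not quasiconformally equivalent (the curves in $\Bn$ joining a fixed compact set to $\Sn$ have positive modulus, whereas under a hypothetical onto map their images would join a compact set to $\infty$ in $\Rn$ and hence have zero modulus), the boundary $\partial\Omega$ is nonempty. Translating $f$ by a point $p\in\partial\Omega$ we may assume $0\notin\Omega$; this only forces us to shrink the constants $\delta_i$, because $\psi$ is increasing and both $\sigma(\Sn)$ and $\int_0^1(1-r)^{n-2}\,dr$ are finite, so the contribution of the set $\{\,|f|\le|p|\,\}$ is absorbed into a constant. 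The payoff is that for every $x$ the segment from $f(x)\in\Omega$ to $0\notin\Omega$ meets $\partial\Omega$, whence $d(f(x),\partial\Omega)\le|f(x)|$; combined with Lemma \ref{firstmoduluscons} this bounds the oscillation of $f$ on each $B_x$ by a multiple of $|f(x)|$.

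For \eqref{Lpsicond}$\Rightarrow$\eqref{maxfunchar}: fix $r\in(0,1)$ and choose $x_r$ with $|x_r|=r$ and $|f(x_r)|=M(r,f)$. Applying Lemma \ref{lemma42restatement} with $\phi$ the identity, $\delta=1$, and $M=M(n,K)$ large enough that the exceptional set has measure at most $\tfrac12\sigma(S_{x_r})$, we obtain $|f(\omega)|\ge M(r,f)/M$ on at least half of $S_{x_r}$. Since $\sigma(S_{x_r})\approx(1-r)^{n-1}$, this gives, for $g:=\psi(\delta_1|f(\cdot)|)\in L^1(\Sn)$,
\begin{equation*}
\sigma\bigl(\{\,g\ge\psi(\delta_1 M(r,f)/M)\,\}\bigr)\ge c\,(1-r)^{n-1},
\end{equation*}
so $\psi(\delta_1 M(r,f)/M)\le g^{*}\bigl(c(1-r)^{n-1}\bigr)$ where $g^{*}$ is the nonincreasing rearrangement of $g$. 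With $\delta_2:=\delta_1/M$ and the substitution $u=c(1-r)^{n-1}$,
\begin{equation*}
\int_0^1(1-r)^{n-2}\psi(\delta_2 M(r,f))\,dr\le\frac{1}{c(n-1)}\int_0^{c}g^{*}(u)\,du\le\frac{\|g\|_{1}}{c(n-1)}<\infty .
\end{equation*}

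For \eqref{maxfunchar}$\Rightarrow$\eqref{Lpsicond} I would first record a quasi-monotonicity of the maximum modulus coming from the normalization and Lemma \ref{firstmoduluscons}: for the dyadic radii $r_k=1-2^{-k}$ one has $M(r_k,f)\le M(r_{k+1},f)\le C(n,K)\,M(r_k,f)$, the left-hand bound because $f(B_{x_{r_k}})$ contains a ball about $f(x_{r_k})$ of radius $\gtrsim d(f(x_{r_k}),\partial\Omega)$ and hence a point of norm $\ge M(r_k,f)$ at a level not exceeding $r_{k+1}$, the right-hand bound from the distortion estimate applied to a ball of comparable scale together with $d(f(x),\partial\Omega)\le|f(x)|$. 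Hence, up to a single fixed change of $\delta_2$, condition \eqref{maxfunchar} becomes $\sum_k\psi(\delta_2 M(r_k,f))\,2^{-k(n-1)}<\infty$. Next, the normalization yields $|f(\omega)|\le f^{*}(\omega)$ and in fact $f^{*}(\omega)\approx M_{\mathrm{rad}}(\omega):=\sup_{0\le t<1}|f(t\omega)|$, since every $x\in\Gamma(\omega)$ lies in some $B_{t\omega}$ on which $|f|\le(1+C)|f(t\omega)|$. It then suffices to estimate the distribution function $\sigma(\{f^{*}>s\})$ by the quantity in \eqref{maxfunchar} and integrate via the layer-cake formula: cover $\{f^{*}>s\}$ by caps $S_x$ with $|f(x)|>s$, extract a Besicovitch subfamily, and, using Lemma \ref{Mineq} (or Lemma \ref{lemma42restatement}) on each selected cap together with the quasi-monotonicity of $M(\cdot,f)$, bound the number and sizes of the caps at each dyadic scale in terms of the set where $M(\cdot,f)>cs$; the freedom to choose $\delta_1$ then converts the resulting bounded, scale-independent distortion constants into one harmless change of $\delta_1$, so that no doubling of $\psi$ is needed.

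The step I expect to be the main obstacle is exactly this last one: controlling the distribution function of the (nontangential, equivalently radial) maximal function by the weighted integral of $M(r,f)$. The delicate point is that the argument must introduce no multiplicative error inside $\psi$ that grows with the number of dyadic scales — a single distortion constant raised to the number of scales would be fatal without a doubling hypothesis on $\psi$ — so the cap selection and the summation over scales have to be organized so that only one fixed dimensional constant is lost overall and can be absorbed into $\delta_1$. The implication \eqref{Lpsicond}$\Rightarrow$\eqref{maxfunchar} above, being a one-step rearrangement estimate, does not run into this difficulty.
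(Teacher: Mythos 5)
Your proof of \eqref{Lpsicond}$\Rightarrow$\eqref{maxfunchar} is correct and takes a genuinely different route from the paper's. The paper obtains this implication by summing $\psi(\epsilon M(r_k,f))$ over dyadic radii, recognizing the sum as $\int\psi(\epsilon|f|)\,d\mu$ for a discrete Carleson measure, and invoking Lemma \ref{carleson}, whose proof in turn rests on a Whitney decomposition and on Theorem \ref{maximalstarequiv} (control of the nontangential maximal function via the Hardy--Littlewood operator on $L^2$). Your one-step rearrangement argument --- Lemma \ref{lemma42restatement} gives $|f(\omega)|\ge M(r,f)/M$ on half of $S_{x_r}$, hence $\psi(\delta_1 M(r,f)/M)\le g^{*}(c(1-r)^{n-1})$, and the substitution $u=c(1-r)^{n-1}$ reduces the weighted integral to $\int_0^c g^{*}\le\|g\|_1$ --- is shorter and bypasses the maximal-function machinery entirely; the normalization $0\notin f(\Bn)$ and the resulting adjustment of the $\delta_i$ are handled correctly.

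The converse implication \eqref{maxfunchar}$\Rightarrow$\eqref{Lpsicond} is where your proposal has a genuine gap, and you flag it yourself: the distributional estimate $\sigma(\{f^{*}>s\})\lesssim(1-r(s))^{n-1}$ is never proved. The sketch --- cover $\{f^{*}>s\}$ by caps $S_x$ with $|f(x)|>s$, extract a Besicovitch subfamily, and control ``the number and sizes of the caps at each dyadic scale'' via Lemma \ref{Mineq} --- does not close. Lemma \ref{Mineq} bounds the measure of the subset of a \emph{single} cap on which $f(\omega)$ deviates from $f(x)$; it gives no control on how many essentially disjoint caps with $|f(x)|>s$ can occur, and nothing in the covering structure ties that count to the level sets of $M(\cdot,f)$. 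Your worry about a distortion constant compounding over dyadic scales is a symptom of the argument being organized the wrong way around.

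The estimate you need is in fact the easy half of the paper's proof and requires no covering at all. For $E=\{\omega:\tfrac{\delta}{2}|f(\omega)|>\lambda\}$ and $r=r(\lambda)$ defined by $\delta M(r,f)=\lambda$, the family $\Gamma_E$ of radial segments joining $S(0,r)$ to $E$ has modulus $\sigma(E)(\log(1/r))^{1-n}\ge\sigma(E)\,2^{1-n}(1-r)^{1-n}$ for $r>1/2$, while each curve of $f\Gamma_E$ joins $\overline{B}(0,\lambda/\delta)$ to $\Rn\setminus B(0,2\lambda/\delta)$, so $\textnormal{Mod}(f\Gamma_E)\le\omn(\log 2)^{1-n}$. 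Quasi-invariance then yields $\sigma(E)\le C(n,K)(1-r(\lambda))^{n-1}$ with the constant sitting \emph{outside} $\psi$, and Fubini converts $\int_0^\infty\psi'(\lambda)(1-r(\lambda))^{n-1}\,d\lambda$ into $C\int_0^1(1-t)^{n-2}\psi(\delta M(t,f))\,dt$. Replace your covering sketch with this modulus estimate and the proof is complete; note also that working with the radial limits $f(\omega)$ directly, as here, lets you avoid the comparison between $f^{*}$ and the radial maximal function altogether.
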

\begin{proof}
First suppose (\ref{maxfunchar}) holds for some $\delta > 0$. We can assume that $f(0) = 0$.  We will show in this case that there exists a constant $C = C(n,K)$ such that
\begin{eqnarray}\label{distequiv}
\int_{\Sn} \psi(\frac{\delta}2|f(\omega)|) d\sigma \leq C\int_0^1(1-r)^{n-2}\psi(\delta M(r,f)) dr.
\end{eqnarray}

To prove (\ref{distequiv}) we rewrite the integral on the left as
$$
\int_{\mathbb{S}^{n-1}} \psi(\frac{\delta}2|f(\omega)|)d\sigma(\omega) = \int_0^\infty \psi'(\lambda) \sigma(\{\omega \in \Sn: \frac{\delta}2|f(\omega)| > \lambda \})d\lambda.
$$
Let $E = \{\omega \in \Sn:  \frac{\delta}2|f(\omega)| > \lambda \}$ for a fixed $\lambda >0$. We obtain an upper bound on $\sigma(E)$ using modulus of curve families as follows.

There exists a unique $r = r(\lambda)$ such that
$$
\delta M(r,f) = \lambda.
$$
Let $\Gamma_E$ be the family of radial segments connecting $B(0, r)$ to $E$. Then
$$
M(\Gamma_{E}) = \frac{\sigma(E)}{(\log(1/r))^{n-1}} \geq \frac{\sigma(E)}{2^{n-1}(1-r)^{n-1}}
$$
as long as $1/2 < r < 1.$ 

Each curve in $ f(\Gamma_{E})$ connects $B(0,\lambda/\delta)$ to $\mathbb{R}^n\setminus B(0,2\lambda/\delta)$, and so 
$$
M(f\Gamma_{E}) \leq \frac{\omn}{(\log2)^{n-1}}.
$$
Since $M(\Gamma_{E}) \leq K M(f\Gamma_{E})$ we have
$$
\sigma(E) \leq C(n,K)(1-r)^{n-1}
$$
whenever $\delta M(r,f) = \lambda$ and $1/2 < r < 1$. This estimate and an application of Fubini's theorem now give
\begin{gather*}
 \int_0^\infty \psi'(\lambda) \sigma(\{\omega \in \Sn: \frac{\delta}2|f(\omega)| > \lambda \})d\lambda \leq \\ \leq
 \sigma(\Sn)\psi(\delta M(1/2,f)) + C(n,K)\int_0^\infty\psi'(\lambda)(1-r(\lambda))^{n-1}d\lambda = \\= 
  \sigma(\Sn)\psi(\delta M(1/2,f)) + C(n,K)\int_0^\infty\psi'(\lambda)\int_{r(\lambda)}^1(1-t)^{n-2}dtd\lambda = \\= 
    \sigma(\Sn)\psi(\delta M(1/2,f)) + C(n,K)\int_0^1(1-t)^{n-2}\int_0^{\delta M(t,f)} \psi'(\lambda)d\lambda dt \leq \\ \leq
    C(n,K)\int_0^1 (1-t)^{n-2}\psi(\delta M(t,f))dt,
 \end{gather*}
 which gives (\ref{distequiv}).

For the converse direction assume there exists $\delta > 0$ such that 
\begin{align*}
\int_{\mathbb{S}^{n-1}} \psi(\delta|f(\omega)|)d\sigma(\omega) < \infty,
\end{align*}
and choose points $x_k \in \Bn$ with ${|x_k| = r_k = 1 - 2^{-k}}$ and $|f(x_k)| = M(r_k, f), k=1,2,\mathellipsis$. 

Given any $\epsilon > 0$ we have
\begin{align*}
\int_0^1 (1-r)^{n-2}\psi(\epsilon M(r,f))dr &\leq& 2^n\sum_{k=1}^\infty (2^{-k})^{n-1}\psi(\epsilon M(r_k, f)) = 2^n\int_{\Bn} \psi(\epsilon |f(x)|)d\mu,
\end{align*} 
where $d\mu(x) = \sum_{k=1}^\infty (1-|x|)^{n-1}\delta_{x_k}$. The measure $\mu$ is clearly a Carleson measure, and assuming $f(x) \neq 0$ for all $x\in \Bn$ we can apply Lemma~\ref{carleson} below to obtain constants $C_1$ and $C_2$ not depending on $f$ or $\delta$ such that
\begin{eqnarray}\label{ineqref}
\int_{\Bn} \psi(\delta |f(x)|/C_1)d\mu \leq C_2\int_{\Sn} \psi(\delta |f(\omega)|) d\sigma.
\end{eqnarray}
By choosing $\epsilon = \delta/C_1$ the proof is finished in this case. The case when $0\in f(\Bn)$ is handled by applying the result to $g(x) = f(x) - y_0$ for some fixed $y_0 \in \Rn\setminus f(\Bn)$.\end{proof}

We now give some results involving maximal functions, which we need for proving Lemma \ref{carleson}. Given a quasiconformal mapping $f$ on $\Bn$ its nontangential maximal function is defined as
\begin{align*}
f^*(\omega) = \sup_{x\in \Gamma(\omega)} |f(x)|, \;\;\omega \in \Sn.
\end{align*}
Clearly $\psi(\delta f^*(\omega)) \in L^1(\Sn)$ implies $\psi(\delta |f(\omega)|) \in L^1(\Sn)$. The Hardy-Littlewood maximal function and one of the modulus estimates from Section 2 help us prove the reverse implication as stated in Theorem \ref{maximalstarequiv}.

\begin{theorem}\label{maximalstarequiv}
Let $\psi$ be a growth function and $f: \Bn \rightarrow \Rn$ a $K$-quasiconformal mapping such that $0 \notin f(\Bn)$. There exist constants $C_1 = C_1(n,K)$ and $C_2 = C_2(n)$ such that  
\begin{eqnarray*}
\int_{\Sn} \psi(\delta/C_1 f^*(\omega)) d\sigma \leq C_2 \int_{\Sn} \psi(\delta |f(\omega)|) d\sigma
\end{eqnarray*}
whenever $\delta > 0$.
\end{theorem}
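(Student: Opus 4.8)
The strategy is to dominate the nontangential maximal function $f^*$ by a Hardy--Littlewood type maximal function of a well-chosen function on $\Sn$, and then invoke the weak-type $(1,1)$ bound for that maximal operator together with the layer-cake formula, exactly as in the converse direction of Theorem \ref{firstequiv}. The first step is to fix $\omega\in\Sn$ and $x\in\Gamma(\omega)$, so that $\omega\in S_x$. By Lemma \ref{firstmoduluscons} (applied to $\Omega = f(\Bn)$, which does not contain $0$, so $d(f(x),\partial\Omega)\le|f(x)|$) we have $\operatorname{diam} f(B_x)\approx d(f(x),\partial\Omega)$, and in particular $|f(x)|\le |f(x)-f(y)| + |f(y)| \le C\, d(f(x),\partial\Omega) + |f(y)|$ for $y\in B_x$. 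The point is to show that $|f(x)|$ is controlled by an average of $|f(\omega')|$ over the cap $S_x$. Using Lemma \ref{Mineq} with a fixed large $M=M(n,K)$, the set of $\omega'\in S_x$ with $|f(\omega')-f(x)|>M\,d(f(x),\partial\Omega)$ has $\sigma$-measure at most $\tfrac12\sigma(S_x)$ (choosing $M$ large enough that $C(\log M)^{1-n}\le\tfrac12$); hence on a subset of $S_x$ of measure at least $\tfrac12\sigma(S_x)$ we have $|f(\omega')| \ge |f(x)| - M\,d(f(x),\partial\Omega) \ge c\,|f(x)|$ for some $c=c(n,K)>0$, using $d(f(x),\partial\Omega)\le|f(x)|$ and Lemma \ref{firstmoduluscons} once more (shrinking $x$ toward $\partial\Bn$ if $|f(x)|$ is comparable to $d$; in general one argues that $|f(x)| \approx \sup_{B_x}|f|$ up to the additive $d$-term, which is itself $\lesssim|f(x)|$). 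Consequently
\begin{align*}
|f(x)| \le C\,\frac{1}{\sigma(S_x)}\int_{S_x} |f(\omega')|\,d\sigma(\omega'),
\end{align*}
with $C=C(n,K)$.

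The second step converts this into a maximal function estimate. Since the cap $S_x$ is contained in the spherical ball centered at $\omega$ of radius comparable to $1-|x|$ (by the geometry of $B_x$ and the definition of $S_x$), and $\sigma(S_x)\approx (1-|x|)^{n-1}$ is comparable to the $\sigma$-measure of that spherical ball, the average above is bounded by $C\,\mathcal{M}(|f|)(\omega)$, where $\mathcal{M}$ denotes the Hardy--Littlewood maximal operator on $\Sn$ with respect to $\sigma$. Taking the supremum over $x\in\Gamma(\omega)$ gives
\begin{align*}
f^*(\omega) \le C_1\,\mathcal{M}(|f|)(\omega), \qquad \omega\in\Sn,
\end{align*}
with $C_1=C_1(n,K)$. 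Now for any $\delta>0$, apply this pointwise bound and the layer-cake identity:
\begin{align*}
\int_{\Sn}\psi\!\left(\tfrac{\delta}{C_1}f^*(\omega)\right)d\sigma
&\le \int_{\Sn}\psi\!\left(\delta\,\mathcal{M}(|f|)(\omega)\right)d\sigma \\
&= \int_0^\infty \psi'(\lambda)\,\sigma\!\left(\{\omega : \delta\,\mathcal{M}(|f|)(\omega) > \lambda\}\right)d\lambda.
\end{align*}
By the weak-type $(1,1)$ inequality for $\mathcal{M}$ on $\Sn$, applied to the function $g(\omega') = \delta|f(\omega')|\,\mathbf{1}_{\{\delta|f(\omega')|>\lambda/2\}}$ together with the splitting $\delta|f| \le \lambda/2 + g$ (so that $\delta\mathcal{M}(|f|)>\lambda$ forces $\mathcal{M}g>\lambda/2$), we get $\sigma(\{\delta\mathcal{M}(|f|)>\lambda\}) \le \tfrac{C_2'}{\lambda}\int_{\{\delta|f(\omega')|>\lambda/2\}}\delta|f(\omega')|\,d\sigma(\omega')$ with $C_2'=C_2'(n)$. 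Substituting and applying Fubini as in Theorem \ref{firstequiv} yields
\begin{align*}
\int_{\Sn}\psi\!\left(\tfrac{\delta}{C_1}f^*\right)d\sigma
\le C_2'\int_{\Sn}\delta|f(\omega')|\int_0^{2\delta|f(\omega')|}\frac{\psi'(\lambda)}{\lambda}\,d\lambda\,d\sigma(\omega').
\end{align*}
Using that $\psi$ is increasing, $\psi'(\lambda)/\lambda$ integrated against... here one uses $\int_0^{2s}\psi'(\lambda)\,d\lambda = \psi(2s)$ and the monotonicity $\lambda \ge s$ is false on $(0,2s)$; instead one argues directly that $\delta|f(\omega')|\int_0^{2\delta|f(\omega')|}\psi'(\lambda)\lambda^{-1}d\lambda$ is $\lesssim \psi(2\delta|f(\omega')|)$ only under a doubling hypothesis, which we do not have. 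To avoid this, I will instead bound the distribution function of $\mathcal{M}(|f|)$ more crudely: since $f^*(\omega)\le C_1\mathcal{M}(|f|)(\omega)$ and $\psi$ is increasing, it suffices to observe $\{\delta\mathcal{M}(|f|)>\lambda\}\subset\{\mathcal{M}(\delta|f|\mathbf1_{\{\delta|f|>\lambda/2\}})>\lambda/2\}$ has measure $\le \tfrac{2C_2'}{\lambda}\int_{\{\delta|f|>\lambda/2\}}\delta|f|\,d\sigma$, and then change variables $\lambda\mapsto 2\lambda$ and use $\psi'(2\lambda)\le$ (no — ) ; the clean route is: $\int_{\Sn}\psi(\tfrac\delta{C_1}f^*)\le \int_{\Sn}\psi(\delta\mathcal M(|f|))$, split $\Sn = \{\mathcal M(|f|)\le 2|f|/1\}\cup\dots$ —

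**Main obstacle.** The genuine difficulty, and the one I expect to require care, is precisely this last step: passing from the weak-$(1,1)$ control of $\mathcal{M}(|f|)$ to a bound on $\int\psi(c\,\mathcal{M}(|f|))\,d\sigma$ by $\int\psi(\delta|f|)\,d\sigma$ \emph{without} assuming $\psi$ is doubling, since Theorem \ref{maximalstarequiv} is stated with no growth restriction on $\psi$. The resolution is to not integrate the weak-type bound against $\psi'$ at the scale of $\mathcal M(|f|)$, but rather to use the standard fact that for the Hardy--Littlewood maximal function one has the pointwise rearrangement inequality $(\mathcal M g)^*(t) \le C\, \tfrac1t\int_0^t g^*(s)\,ds =: C\, g^{**}(t)$, and then the sublinear operator $g\mapsto g^{**}$ together with the layer-cake formula and a Hardy-type inequality shows $\int_{\Sn}\psi(c\,\mathcal M(|f|))\,d\sigma \le C(n)\int_{\Sn}\psi(\delta|f|)\,d\sigma$ for an absolute loss in the constant $\delta$; alternatively, and more in the spirit of the paper, one invokes Lemma \ref{carleson} or its proof mechanism. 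Concretely, I would split $S_x$ using Lemma \ref{Mineq} into dyadic level sets $\{|f(\omega')| > 2^j c|f(x)|\}$ and estimate the contribution of each level geometrically, producing the bound $|f(x)| \le C\,\frac1{\sigma(S_x)}\int_{S_x}|f(\omega')|\,d\sigma$ as above but then, rather than invoking $\mathcal M$, directly bounding $\sigma(\{f^*>\lambda\})$ by iterating Lemma \ref{Mineq} over a Vitali cover of $\{f^*>\lambda\}\cap\Sn$ by caps $S_x$ with $\delta|f(x)|>\lambda$, which gives $\sigma(\{f^*>\lambda\}) \le C\,\sigma(\{\delta|f|>c\lambda\})$ — a restricted weak-type bound with \emph{constants inside}, and this is exactly the form that survives integration against $\psi'$ with only a change $\delta\mapsto\delta/C_1$, since $\int_0^\infty\psi'(\lambda)\sigma(\{\delta|f|>c\lambda\})\,d\lambda = c^{-1}\int_{\Sn}\psi(\tfrac\delta c|f(\omega')|)\,d\sigma$... wait, that is $\int\psi(\delta|f|/c)$ which is the wrong direction unless $c\ge1$; here $c<1$, so one lands on $\int\psi(\delta|f|/c)$ with $c<1$, i.e. a \emph{larger} argument, which is bad. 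The correct bookkeeping, which I will carry out carefully in the proof, is to arrange the covering argument so that it is $f^*$ that acquires the bad constant: $\sigma(\{f^*>C_1\lambda\}) \le C_2\,\sigma(\{|f|>\lambda\})$ pointwise in $\lambda$ (with $\delta$ absorbed), whence $\int_{\Sn}\psi(\tfrac\delta{C_1}f^*)\,d\sigma = \int_0^\infty\psi'(\lambda)\sigma(\{f^*>\tfrac{C_1}\delta\lambda\})\,d\lambda \le C_2\int_0^\infty\psi'(\lambda)\sigma(\{|f|>\tfrac\lambda\delta\})\,d\lambda = C_2\int_{\Sn}\psi(\delta|f|)\,d\sigma$. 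Establishing the distributional inequality $\sigma(\{f^* > C_1\lambda\}) \le C_2\,\sigma(\{|f| > \lambda\})$ via a Vitali covering of the open set $\{f^*>C_1\lambda\}\cap\Sn$ by admissible caps and Lemma \ref{Mineq} is the technical heart of the argument.
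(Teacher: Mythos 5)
Your endgame is a viable route, and it is genuinely different from the paper's, but as written the proposal has one concrete gap and one deferred step. The paper's proof is much shorter and sidesteps the obstacle you (correctly) identify as the main difficulty: it sets $\phi=\psi^{1/2}$, uses Lemma~\ref{lemma42restatement} to get $\phi(\delta|f(\omega')|)\ge\phi(\delta|f(x)|/C_1)$ on at least half of the cap $S_x$, deduces the pointwise bound $\phi(\delta f^*(\omega)/C_1)\le 2M\bigl(\phi(\delta|f|)\bigr)(\omega)$ for the Hardy--Littlewood maximal operator $M$, and then simply invokes the $L^2$-boundedness of $M$, since $\int\psi=\int\phi^2$. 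Composing with the growth function \emph{before} applying the maximal operator is what makes the doubling issue evaporate; your proposal never finds this trick and instead cycles through several dead ends (weak-$(1,1)$ plus layer-cake, the rearrangement bound) before settling on the distributional inequality $\sigma(\{f^*>C_1\lambda\})\le C_2\,\sigma(\{|f|>\lambda\})$. That inequality is true and does finish the proof without any doubling hypothesis, by covering $\{f^*>C_1\lambda\}$ with caps $S_x$, $|f(x)|>C_1\lambda$, on each of which $\{|f|>\lambda\}$ has density at least $1/2$, extracting a disjoint Vitali subfamily, and integrating against $\psi'$; it is arguably a more informative statement than the paper's.

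Two things must be repaired before this counts as a proof. First, the density statement cannot come from Lemma~\ref{Mineq} as you claim: that lemma only yields $|f(\omega')|\ge|f(x)|-M\,d(f(x),\partial\Omega)$ on most of $S_x$, which is vacuous whenever $d(f(x),\partial\Omega)$ is comparable to $|f(x)|$ (the hypothesis $0\notin f(\Bn)$ gives only $d(f(x),\partial\Omega)\le|f(x)|$, not smallness), and your parenthetical attempt to patch this by ``shrinking $x$ toward $\partial\Bn$'' is not an argument. The correct tool is Lemma~\ref{lemma42restatement}, which gives directly $\sigma(\{\omega'\in S_x:|f(\omega')|<|f(x)|/M\})\le C\sigma(S_x)(\log M)^{-1}$ --- exactly the one-sided lower bound the covering argument needs, and the reason that lemma is in the paper. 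Second, you explicitly defer the Vitali covering step, which you yourself call the technical heart; until it is written out (including the harmless degenerate case where some $S_x$ is all of $\Sn$), the proposal remains a plan rather than a proof.
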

\begin{proof}
We may assume there is $\delta > 0$ such that $\int_{\Sn} \psi(\delta |f(\omega)|) d\sigma < \infty.$ Let $\phi = \psi^{1/2}$. By Lemma \ref{lemma42restatement} there is a constant $C_1 = C_1(n,K)$ such that
\begin{align*}
\sigma(\{\omega\in S_x: \phi(\delta |f(\omega)|) \geq \phi(\delta|f(x)|/C_1)  \}) \geq \sigma(S_x)/2
\end{align*}
for every $x\in \mathbb{B}^n$. Thus
\begin{eqnarray*}
\int_{S_x} \phi(\delta|f(\omega)|) d\sigma &\geq& \phi(\delta|f(x)|/C_1) \sigma(\{\omega\in S_x: \phi(\delta |f(\omega)|)\geq \phi(\delta|f(x)|/C_1)  \})\\ &\geq& \phi(\delta|f(x)|/C_1) \frac{\sigma(S_x)}2
\end{eqnarray*}
for each $x\in \Bn$. Let $M$ denote the non-centered Hardy-Littlewood maximal function on $\Sn$; that is,
\begin{align*}
Mg(\omega) = \sup_{r>0} \frac{1}{|\Sn \cap B(\omega, r)|}\int\limits_{\Sn \cap B(\omega, r)} g d\sigma,
\end{align*}
where $g \in L^1(\Sn)$.
It follows from the previous inequality that
\begin{eqnarray*}
 \phi(\delta f^*(\omega)/C_1)  \leq 2M(\phi(\delta|f|)(\omega).
\end{eqnarray*}
Since $M$ is a bounded operator on $L^2(\Sn)$ \cite{stein} we have that
\begin{eqnarray*}
 \int_{\Sn} \psi(\delta f^*(\omega)/C_1) d\sigma = \int_{\Sn} \phi^2(\delta f^*(\omega)/C_1) d\sigma \leq 4\int_{\Sn} M^2(\phi(\delta |f|)(\omega)d\sigma \\\leq C_2\int_{\Sn} \phi^2(\delta |f(\omega)|) d\sigma = C_2\int_{\Sn} \psi(\delta |f(\omega)|) d\sigma,
\end{eqnarray*}
which completes the proof.
\end{proof}
We introduce Carleson measures in order to finally prove the lemma we used in the proof of Theorem \ref{firstequiv}. A measure $\mu$ on $\Bn$ is called a \textit{Carleson measure} if there exists a constant $C(\mu)>0$ such that
\begin{align*}
\mu(\Bn \cap B(\omega, r)) \leq C(\mu) r^{n-1}
\end{align*}
for all $\omega \in \Sn$ and all $r > 0$. The infimum of all such constants $C(\mu)$ is called the Carleson norm of $\mu$ and is denoted as $\alpha_\mu$. 

\begin{lemma}\label{carleson}
Let $\psi$ be a growth function, $f: \Bn \rightarrow \Rn$ a $K$-quasiconformal mapping such that $f(x) \neq 0$ for all $x \in \Bn$, and $\mu$ a Carleson measure on $\Bn$. Then there exist constants $C_1 = C_1(n,K)$ and $C_2 = C_2(\alpha_\mu, n)$ such that 
\begin{eqnarray*}
\int_{\Bn}\psi(\delta|f(x)|/C_1)d\mu \leq C_2\int_{\Sn} \psi(\delta |f(\omega)|) d\sigma
\end{eqnarray*}
whenever $\delta > 0$.
\end{lemma}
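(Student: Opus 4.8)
The plan is to reduce the statement to a pointwise domination by the non-tangential maximal function and then invoke the Carleson-measure inequality for nontangential maximal functions. First I would reduce to the case where the right-hand side is finite, so that $\psi(\delta|f(\omega)|) \in L^1(\Sn)$; otherwise there is nothing to prove. The key geometric observation is that for each $x \in \Bn$, the cap $S_x$ has $\sigma$-measure comparable to $(1-|x|)^{n-1}$, and $x \in \Gamma(\omega)$ precisely when $\omega \in S_x$, so $|f(x)| \leq f^*(\omega)$ for every $\omega \in S_x$. Combined with Lemma~\ref{lemma42restatement} (applied with $\phi = \psi$, $M$ a suitable absolute constant), one gets that on a set of $\omega \in S_x$ of measure at least $\sigma(S_x)/2$ one has $\psi(\delta|f(\omega)|) \geq \psi(\delta|f(x)|/C_1)$ for a constant $C_1 = C_1(n,K)$. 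Averaging this over $S_x$ exactly as in the proof of Theorem~\ref{maximalstarequiv} yields
\begin{align*}
\psi(\delta|f(x)|/C_1) \leq \frac{2}{\sigma(S_x)}\int_{S_x} \psi(\delta|f(\omega)|)\,d\sigma(\omega) \leq 2 \cdot \frac{\sigma(\Sn\cap B(\omega_x,\, c(1-|x|)))}{\sigma(S_x)}\, M(\psi(\delta|f|))(\omega)
\end{align*}
for any $\omega$ in a ball around $\omega_x := x/|x|$ of radius comparable to $1-|x|$; since $\sigma(S_x) \approx (1-|x|)^{n-1} \approx \sigma(\Sn\cap B(\omega_x, c(1-|x|)))$, this gives $\psi(\delta|f(x)|/C_1) \lesssim g^*(\omega_x)$ where $g = \psi(\delta|f|) \in L^1(\Sn)$ and $g^*$ denotes its nontangential maximal function on $\Sn$, or more simply that $\psi(\delta|f(x)|/C_1) \lesssim Mg(\omega)$ for all $\omega$ in a fixed cap around $x/|x|$.

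Next I would integrate against $d\mu$. The standard Carleson-measure argument shows that for any nonnegative measurable $F$ on $\Bn$ which is dominated by (a nontangential extension of) a function $h$ on $\Sn$ in the sense that $F(x) \lesssim \inf_{\omega \in S_x} h^*(\omega)$, one has $\int_{\Bn} F \, d\mu \lesssim \alpha_\mu \int_{\Sn} h^* \, d\sigma$; and for $h \in L^1$ one has $\int_{\Sn} h^* \lesssim \int_{\Sn} Mh$. The cleanest route is: by the layer-cake formula, $\int_{\Bn}\psi(\delta|f(x)|/C_1)\,d\mu = \int_0^\infty \mu(\{x : \psi(\delta|f(x)|/C_1) > \lambda\})\,d\psi^{-1}$-type expansion---but more directly, using the pointwise bound above, the set $\{x \in \Bn : \psi(\delta|f(x)|/C_1) > \lambda\}$ is contained in $\{x : Mg(x/|x|) \gtrsim \lambda \text{ on a cap around } x/|x|\}$, which projects into the open set $O_\lambda = \{\omega \in \Sn : Mg(\omega) > c\lambda\}$, and its $\mu$-measure is controlled: covering $O_\lambda$ by a bounded-overlap family of caps $B(\omega_j, r_j)$ with $\sum r_j^{n-1} \lesssim \sigma(O_\lambda)$ (a Vitali/Whitney decomposition), each point $x$ in our set has $B(x/|x|, c(1-|x|)) \subset O_\lambda$, forcing $x$ to lie in the Carleson box over one of the $B(\omega_j, C r_j)$, hence $\mu(\{\psi(\delta|f|/C_1) > \lambda\}) \lesssim \alpha_\mu \sum r_j^{n-1} \lesssim \alpha_\mu\, \sigma(O_\lambda)$. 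Then
\begin{align*}
\int_{\Bn}\psi(\delta|f(x)|/C_1)\,d\mu \lesssim \alpha_\mu \int_0^\infty \sigma(\{Mg > c\lambda\})\,d\lambda \lesssim \alpha_\mu \|Mg\|_{L^1}.
\end{align*}
The final ingredient is that the Hardy-Littlewood maximal operator is of weak type $(1,1)$ on $\Sn$, so $\|Mg\|_{L^1}$ is not itself bounded by $\|g\|_{L^1}$; to circumvent this I would instead run the argument with $\phi = \psi^{1/2}$ exactly as in Theorem~\ref{maximalstarequiv}, obtaining $\phi(\delta|f(x)|/C_1) \lesssim M(\phi(\delta|f|))(\omega)$ on the relevant cap, whence $\psi(\delta|f(x)|/C_1^2) = \phi^2(\delta|f(x)|/C_1) \lesssim (M\phi(\delta|f|)(\omega))^2$, and then use the $L^2$-boundedness of $M$: $\int_{\Bn}\psi(\delta|f|/C_1^2)\,d\mu \lesssim \alpha_\mu \int_{\Sn}(M\phi(\delta|f|))^2\,d\sigma \lesssim \alpha_\mu \int_{\Sn}\phi^2(\delta|f|)\,d\sigma = \alpha_\mu \int_{\Sn}\psi(\delta|f|)\,d\sigma$.

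The main obstacle is the Carleson-box counting step: making precise that the $\mu$-measure of $\{x : \psi(\delta|f(x)|/C_1) > \lambda\}$ is bounded by $\alpha_\mu$ times $\sigma$ of the corresponding maximal-function superlevel set on the sphere. This requires a Whitney-type decomposition of the open set $O_\lambda \subset \Sn$ into caps with controlled total $(n-1)$-measure together with the observation that a point $x\in\Bn$ whose surrounding cap of radius $\approx 1-|x|$ sits inside $O_\lambda$ must lie in the Carleson box over a comparable cap from the decomposition; this is the standard "tent space" or "area function" estimate and is where the dimension $n-1$ and the definition of Carleson measure are used in an essential way. Everything else is a routine combination of Lemma~\ref{lemma42restatement}, the measure comparison $\sigma(S_x) \approx (1-|x|)^{n-1}$, the layer-cake formula, and the $L^2$-boundedness of the Hardy-Littlewood maximal function already cited from \cite{stein}.
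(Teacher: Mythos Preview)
Your proposal is correct, and the Whitney/tent-space step you flag as the ``main obstacle'' is exactly the heart of the paper's proof as well. The ingredients---Whitney decomposition of an open superlevel set on $\Sn$, the Carleson box estimate, the layer-cake formula, and the $\phi=\psi^{1/2}$ trick with $L^2$-boundedness of $M$---are the same ones the paper uses.

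The organization, however, is different, and the paper's route is tidier. You go directly from $\psi(\delta|f(x)|/C_1)$ to $M(\psi(\delta|f|))(\omega)$ via Lemma~\ref{lemma42restatement}, then run the tent/Carleson argument against level sets of $Mg$ (or rather $(M\phi(\delta|f|))^2$), and finish with the $L^2$ bound. The paper instead uses only the \emph{trivial} pointwise bound $|f(x)|\le f^*(\omega)$ for $\omega\in S_x$ (which you state but then abandon): this immediately gives $E(\lambda)=\{\epsilon|f|>\lambda\}$ sitting over $U(\lambda)=\{\epsilon f^*>\lambda\}$, Whitney-decomposes $U(\lambda)$, and obtains $\mu(E(\lambda))\le C\sigma(U(\lambda))$ and hence $\int_{\Bn}\psi(\epsilon|f|)\,d\mu\le C\int_{\Sn}\psi(\epsilon f^*)\,d\sigma$. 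The comparison between $f^*$ and $|f(\omega)|$ is then outsourced to the already-proven Theorem~\ref{maximalstarequiv}, where Lemma~\ref{lemma42restatement} and the $\psi^{1/2}$ trick live. In effect your argument inlines the proof of Theorem~\ref{maximalstarequiv} into the Carleson step; the paper keeps those two pieces separate, which avoids duplicating the maximal-function work and makes the constant dependence $C_1=C_1(n,K)$, $C_2=C_2(\alpha_\mu,n)$ transparent.

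One small slip: you write $\psi(\delta|f(x)|/C_1^2)=\phi^2(\delta|f(x)|/C_1)$, but since $\phi^2=\psi$ this is $\psi(\delta|f(x)|/C_1)$; the squaring only affects the implied constant on the right, not the argument on the left.
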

\begin{proof}
First let $\epsilon > 0$ be arbitrary and set $E(\lambda) = \{x \in \Bn : \epsilon|f(x)|> \lambda \}$ and $U(\lambda) = {\{\omega \in \Sn : \epsilon f^*(\omega) > \lambda \}}$. Then $U(\lambda)$ is an open set and we can use the generalized form of the Whitney decomposition \cite[Theorem III.1.3]{whitneyref} to write
\begin{eqnarray*}
U(\lambda) = \bigcup_{k=1}^\infty S_{x_k}
\end{eqnarray*}
where the points $x_k \in \Bn$ are chosen so that each $\omega \in U(\lambda)$ is contained in at most $N = N(n)$ caps $S_{x_k}$ and $(1 - |x_k|)/C \leq d(S_{x_k}, \partial U(\lambda)) \leq C(1 - |x_k|)$. Here $C$ is an absolute constant and the distance is measured in the spherical distance on $\Sn$. 

If $\epsilon |f(x)| > \lambda$, then $\epsilon f^*(\omega) > \lambda$ for all $\omega \in S_x$, so $E(\lambda)$ is contained  in the union of $B(x_k/|x_k|, C(1 - |x_k|)), k = 1, 2\dots.$ where $C$ is an absolute constant. Hence by the properties of the measure $\mu$ and the decomposition of $U(\lambda)$ we get
\begin{eqnarray*}
\mu(E(\lambda)) &\leq& \sum_{k=1}^\infty \mu(B(x_k/|x_k|, C(1 - |x_k|))\cap\Bn)\\ &\leq& C \sum_{k=1}^\infty \sigma(S_{x_k}) \leq C\sigma(U(\lambda)).
\end{eqnarray*}
Here $C$ depends on $n$ and the Carleson norm of $\mu$. 

Therefore,
\begin{eqnarray*}
\int_{\Bn} \psi (\epsilon|f(x)|) d\mu&=& \int_0^\infty \psi'(\lambda) \mu(E(\lambda))d\lambda \\ &\leq& C \int_0^\infty  \psi'(\lambda) \sigma(U(\lambda)) d\lambda \\&=& C\int_{\Sn} \psi(\epsilon f^*(\omega))d\sigma.
\end{eqnarray*}
Now applying Theorem \ref{maximalstarequiv} and choosing $\epsilon$ appropriately completes the proof.
\end{proof}
We finish this section with the short proof of Theorem \ref{equivthm}. 
\begin{proof}[Proof of Theorem \ref{equivthm}]
Theorem \ref{firstequiv} gives the equivalence of (2) and (4). By definition (3) implies (1), by Fatou's Lemma (1) gives (2), and in the case that $f(x) \neq 0$ on $\Bn$, Theorem \ref{maximalstarequiv} tells us that (2) implies (3). The other case is obtained by applying the result to an appropriate translation of $f$.
\end{proof}

\section{Results with additional growth conditions on $\psi$}
A growth function $\psi$ is called $doubling$ if there exists a constant $C$ such that $\psi (2t) \leq C\psi(t)$ for all $t\in[0, \infty]$. We refer to the infimum of all such $C$ as the doubling constant of $\psi$. We will make use of the following property of doubling growth functions: if $s \geq 1$ then by choosing the smallest integer $k$ such that $\frac{s}{2^k} \leq 1$ and using the monotonicity of $\psi$ we have that $\psi(st) \leq C^k\psi(t)$ for all $t\in [0,\infty]$. 

We next prove the lemmas needed for Theorem \ref{equivdoubling}. We start by giving a family of Carleson measures on $\Bn$. 
\begin{lemma}\label{afcarleson}
Let $f$ be a $K$-quasiconformal mapping on $\Bn$ such that $f(x) \neq 0$ for all $x \in \Bn$. If $\psi$ is a growth function such that both $\psi$ and $\psi^{-1}$ are doubling then the measure $\mu$ defined by $d\mu = \frac{\psi(a_f(x)(1-|x|))}{\psi(|f(x)|)}\frac{dx}{1-|x|}$ is a Carleson measure on $\Bn$. 
\end{lemma}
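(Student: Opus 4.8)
The plan is to fix $\omega \in \Sn$ and $r > 0$ and estimate $\mu(\Bn \cap B(\omega,r))$ from above by $C r^{n-1}$, with $C$ depending only on $n$ and $K$ (and the doubling constants of $\psi$ and $\psi^{-1}$). The first reduction is to observe that for each $x \in \Bn$ the density $\frac{\psi(a_f(x)(1-|x|))}{\psi(|f(x)|)}$ can be rewritten using Lemma \ref{afequiv}: since $a_f(x)(1-|x|) \approx d(f(x), \partial\Omega) \leq |f(x)|$ (using $0 \notin \Omega = f(\Bn)$), the numerator is comparable to $\psi(C\, d(f(x),\partial\Omega))$, and by the doubling property of $\psi$ this is at most $C\psi(d(f(x),\partial\Omega))$. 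So up to a multiplicative constant we may replace the integrand by $\frac{\psi(d(f(x),\partial\Omega))}{\psi(|f(x)|)}\frac{1}{1-|x|}$.

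Next I would dyadically decompose the region $\Bn \cap B(\omega, r)$ according to the distance of $f(x)$ from the boundary relative to $|f(x)|$: for $j \geq 0$ let $A_j = \{x \in \Bn \cap B(\omega,r) : 2^{-j-1}|f(x)| < d(f(x),\partial\Omega) \leq 2^{-j}|f(x)|\}$. On $A_j$ we have $\psi(d(f(x),\partial\Omega)) \leq \psi(2^{-j}|f(x)|) \leq \psi(|f(x)|)$, so the key point is to get the \emph{gain}: on $A_j$, $\frac{\psi(d(f(x),\partial\Omega))}{\psi(|f(x)|)}$ should be summably small in $j$. This is where the doubling of $\psi^{-1}$ enters — equivalently, $\psi$ satisfies a lower growth bound of the form $\psi(t)/\psi(2t) \leq \theta < 1$, so that $\psi(2^{-j}s) \leq \theta^j \psi(s)$; hence the density on $A_j$ is bounded by $C\theta^j/(1-|x|)$. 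It remains to bound $\int_{A_j} \frac{dx}{1-|x|}$ by $C\,2^{\epsilon j} r^{n-1}$ for some $\epsilon$ small compared to $\log_2(1/\theta)$ — or ideally by $C r^{n-1}$ independent of $j$. A volume/covering argument does this: on $A_j$ the ball $B_x$ is mapped by Lemma \ref{firstmoduluscons} into a region squeezed between spheres of radii $\approx d(f(x),\partial\Omega) \approx 2^{-j}|f(x)|$, so the points of $A_j$ live in a Whitney-type layer; covering $\Bn \cap B(\omega,r)$ by $O(1)$ Whitney cubes at each scale $2^{-k} \lesssim r$ and summing $\sum_k 2^{-k}\cdot(\text{number of cubes})^{\,}\cdot(\text{their measure})/2^{-k}$ gives the $r^{n-1}$ bound, with the $A_j$-restriction only shrinking the count.

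The main obstacle I expect is controlling $\int_{A_j}\frac{dx}{1-|x|}$ uniformly enough in $j$: one must make sure the geometric decay $\theta^j$ genuinely beats any polynomial-in-$j$ loss coming from the layering, and this forces a careful interplay between the doubling constant of $\psi^{-1}$ (which quantifies $\theta$) and the covering multiplicity. A cleaner route, which I would try first, is to avoid the $j$-decomposition entirely: cover $\Bn \cap B(\omega,r)$ by the standard Whitney caps $S_{x_k}$ with $1-|x_k| \approx 2^{-k}$, note $\sum_k \sigma(S_{x_k}) \lesssim r^{n-1}$, and on each corresponding $B_{x_k}$ use that $\frac{\psi(a_f(x)(1-|x|))}{\psi(|f(x)|)} \approx \frac{\psi(d(f(x_k),\partial\Omega))}{\psi(|f(x_k)|)} \leq 1$ (by monotonicity, since $d(f(x_k),\partial\Omega)\le |f(x_k)|$, and $a_f(x)(1-|x|), |f(x)|$ are each comparable to their values at $x_k$ by Lemmas \ref{firstmoduluscons} and \ref{afequiv} together with the doubling of $\psi$). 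Then $\mu(\Bn\cap B(\omega,r)) \leq \sum_k \int_{B_{x_k}} \frac{dx}{1-|x|} \leq C\sum_k (1-|x_k|)^{n-1} \cdot 1 \approx \sum_k \sigma(S_{x_k}) \leq C r^{n-1}$, which is exactly the Carleson condition; here doubling of $\psi^{-1}$ is what lets us pass the factor $\psi(d(f(x),\partial\Omega))/\psi(|f(x)|) \le 1$ cleanly through the comparison constants without it degenerating. I would write up this second approach, falling back on the dyadic decomposition only if the comparison constants turn out to require the quantitative decay.
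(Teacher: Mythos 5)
Your ``cleaner route'' --- the one you say you would actually write up --- has a fatal gap at its central step. Bounding the density $\frac{\psi(a_f(x)(1-|x|))}{\psi(|f(x)|)}$ by a constant (via $a_f(x)(1-|x|)\approx d(f(x),\partial\Omega)\le |f(x)|$ and monotonicity of $\psi$) proves nothing, because the reference measure $\frac{dx}{1-|x|}$ is \emph{not} itself a Carleson measure:
\begin{align*}
\int_{\Bn\cap B(\omega,r)}\frac{dx}{1-|x|}\;\approx\; r^{n-1}\int_0^r\frac{dt}{t}\;=\;\infty .
\end{align*}
Equivalently, in your Whitney covering of $\Bn\cap B(\omega,r)$ the sum $\sum_k(1-|x_k|)^{n-1}$ diverges --- each dyadic layer $1-|x_k|\approx 2^{-k}\lesssim r$ contributes about $r^{n-1}$ and there are infinitely many layers --- so the claimed inequality $\sum_k\sigma(S_{x_k})\lesssim r^{n-1}$ is false for a covering of a Carleson box (it is true for a bounded-overlap decomposition of an open subset of $\Sn$, which is a different situation). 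The whole content of the lemma is that the ratio $\psi(d(f(x),\partial\Omega))/\psi(|f(x)|)$ is not merely bounded but decays, in an integrated sense, fast enough to kill the logarithmic divergence of $\frac{dx}{1-|x|}$; a bound by $1$ cannot see this.

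Your first, dyadic-in-$j$ sketch does identify the correct mechanism (doubling of $\psi^{-1}$ gives $\psi(2^{-j}s)\le\theta^j\psi(s)$ with $\theta<1$), but you never establish the companion estimate on $\int_{A_j}\frac{dx}{1-|x|}$, and that is precisely where the quasiconformal analysis has to enter. The paper's proof supplies this quantitative input differently: it majorizes $\psi(a)/\psi(b)$ by $2^p\bigl(a^p/b^p+a^{1/q}/b^{1/q}\bigr)$ using both doubling hypotheses, reduces via H\"older and Lemma \ref{afintegralequiv} to $\int_{\Bn}a_f^n|f|^{-n}(1-|x|)^{n\epsilon q}dx$, controls that by a change of variables onto $f(\Bn)$ together with Miniowitz's radial growth estimate $(1-|x|)^b/C\le |f(x)|/|f(0)|\le C(1-|x|)^{-b}$, thereby bounding the \emph{total} mass $\mu(\Bn)$ by a constant depending only on $n,K,p,q$; the Carleson bound on small balls then follows by precomposing with a M\"obius automorphism of $\Bn$ and reusing the global bound. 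Some quantitative distortion estimate of this kind (Miniowitz, or an equivalent) is unavoidable, and your proposal contains none.
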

\begin{proof}
The doubling properties of $\psi$ imply that there exist $p, q \geq 1$ such that
\begin{eqnarray*}
\frac{\psi(a)}{\psi(b)} \leq 2^p\left(\frac{a^p}{b^p} + \frac{a^{1/q}}{b^{1/q}}\right)
\end{eqnarray*}
for all $a,b \in (0, \infty)$. Indeed, when $b \leq a$, there exists $k \in \mathbb{N}$ such that
\begin{eqnarray*}
\psi(a)\leq C^{k+1}\psi(b), 
\end{eqnarray*}
and by choosing $p$ large enough we obtain
\begin{eqnarray*}
\frac{\psi(a)}{\psi(b)}\leq 2^p\frac{a^p}{b^p}.
\end{eqnarray*} 
The other case is obtained similarly using the doubling property of $\psi^{-1}$. Thus
\begin{gather*}
\int_{\Bn}\frac{\psi(a_f(x)(1-|x|))}{\psi(|f(x)|)}\frac{dx}{1-|x|} \leq \\\leq  2^p\left(\int_{\Bn} a_f(x)^{1/q}|f(x)|^{-1/q}(1-|x|)^{1/q -1}dx+  \int_{\Bn}  a_f(x)^p|f(x)|^{-p}(1-|x|)^{p -1}dx\right).
\end{gather*}
To show that these integrals are bounded by a constant that depends only on $n, K, p,$ and $q$, let $\epsilon>0$. By Holder's inequality
\begin{gather*}
\int_{\Bn} a_f(x)^{1/q}|f(x)|^{-1/q}(1-|x|)^{1/q-1} dx \leq \\\leq 2^p\left(\int_{\Bn} a_f(x)^n|f(x)|^{-n}(1-|x|)^{n\epsilon q}dx\right)^{1/qn}\left( \int_{\Bn}(1-|x|)^{(1/q-1-\epsilon)n/(n-1/q)}dx \right)^{(n-1/q)/n}.
\end{gather*}
We can choose $\epsilon > 0$, depending only on $q, n$, so that the latter integral converges. Since $f(x) \neq 0$ on $\Bn$, Lemma \ref{firstmoduluscons} implies that $|f(y)|^{-1} \approx |f(x)|^{-1}$ for all $y \in B_x$. Then by applying Lemma \ref{afintegralequiv}, the distortion inequality $|Df(x)|^n \leq K J_f(x)$ and a change of variables we obtain 
\begin{gather*}
\int_{\Bn} a_f(x)^n|f(x)|^{-n}(1-|x|)^{n\epsilon q}dx \leq C \int_{f(\Bn)} \frac{1}{|y|^n}(1-|f^{-1}(y)|)^{n\epsilon q} dy, 
\end{gather*}
where the constant $C$ depends on $n,K$ only. A result of Miniowitz \cite[Theorem 1]{miniowitz} shows there are constants $C,b$ depending on $n,K$ only so that 
\begin{eqnarray*}
\frac{1}{C}(1-|x|)^b \leq \frac{|f(x)|}{|f(0)|} \leq C (1-|x|)^{-b}
\end{eqnarray*}
for all $x\in \Bn$.
By integrating over $f(\Bn) \cap B(0, |f(0)|)$ and $f(\Bn) \setminus  B(0, |f(0)|)$ separately, switching to polar coordinates and inserting the Miniowitz result we have 
\begin{eqnarray*}
\int_{f(\Bn)} \frac{1}{|y|^n}(1-|f^{-1}(y)|)^{n\epsilon q} dy  \leq C \int_0^{|f(0)|} 
\frac{r^{n-1}}{r^n}\frac{r^\delta}{|f(0)|^\delta} dr + \int_{|f(0)|}^\infty 
\frac{r^{n-1}}{r^n}\frac{|f(0)|^\delta}{r^\delta} dr, 
\end{eqnarray*}
with $C$ and $0 < \delta \leq 1$ depending only on $n,K, q$. These integrals are clearly finite and give us the desired universal upper bound for the integral involving exponent $1/q$.

The estimate for the integral $\int_{\Bn}   a_f(x)^p|f(x)|^{-p}(1-|x|)^{p -1} dx$ is similar, noting that if $p > n$ then
\begin{eqnarray*}
\int_{\Bn}   \frac{a_f(x)^p(1-|x|)^{p -1} }{|f(x)|^p}dx &=& \int_{\Bn}  \frac{a_f(x)^n(1-|x|)^{p -1} }{|f(x)|^n}\frac{a_f(x)^{p-n}}{|f(x)|^{p-n}}dx \\ &\leq& C\int \frac{a_f(x)^n(1-|x|)^{n -1} }{|f(x)|^n} dx
\end{eqnarray*}
by Lemmas \ref{firstmoduluscons} and \ref{afequiv} and the assumption on $f$. 
It follows that there is a constant $M = M(n, K, q, p)$ such that 
\begin{eqnarray*}
\int_{\Bn}\frac{\psi(a_f(x)(1-|x|))}{\psi(|f(x)|)}\frac{dx}{1-|x|} \leq M,
\end{eqnarray*}
where $f$ is any map satisfying the assumptions of the Lemma.

To finish showing that $\mu$ is a Carleson measure let $g: \Bn \rightarrow \Rn$ be $K$-quasiconformal with $g(x) \neq 0$ on $\Bn$, $\omega \in \Sn$, and $r > 0$. By what we have already shown we can assume $r < 1/4$. Let $T_{r\omega}$ denote a M\"{o}bius automorphism of $\Bn$ that maps $(1-r)\omega$ to 0 and $\Bn \cap B(\omega, r)$ onto the lower hemisphere of $\Bn$. By setting $f(x) = g(T_{r\omega}(x))$ we have
\begin{gather*}
\int\limits_{\Bn\cap B(\omega,r)}\frac{\psi(a_g(y)(1-|y|))}{\psi(|g(y)|)}\frac{dy}{1-|y|} \leq \\Cr^{n-1}\int\limits_{\Bn\cap B(\omega,r)}\frac{\psi(Ca_f(T_{r\omega}(y))(1-|T_{r\omega}(y)|))}{\psi(|f(T_{r\omega}(y))|)(1-|T_{r\omega}(y)|)}J_{T_{r\omega}}dy = \\= Cr^{n-1}\int\limits_{\Bn} \frac{\psi(Ca_f(z)(1-|z|))}{\psi(|f(z)|)}\frac{dz}{1-|z|}\leq CMr^{n-1},
\end{gather*}
which is what we needed to show.
\end{proof}

\begin{lemma}\label{supaf}
Let $\psi$ be a doubling growth function and $f$ a quasiconformal mapping on $\Bn$. If 
\begin{eqnarray*}
\int_{\Bn} \psi(a_f(x)(1-|x|)) \frac{dx}{1-|x|} < \infty
\end{eqnarray*}
then $\psi(\sup_{x\in \Gamma(\omega)}(a_f(x)(1-|x|))) \in L^1(\Sn)$.
\end{lemma}
\begin{proof}
Fix $\omega \in \Sn$ and let $x\in \Gamma(\omega)$. Then there exists a constant $C$ depending on $n,K$ and the doubling constant of $\psi$ such that
\begin{eqnarray*}
\psi(a_f(x)(1-|x|)) &\leq&\frac{C}{(1-|x|)^n}\int\limits_{B_x} \psi(a_f(y)(1-|y|))dy \\&\leq& C\int\limits_{\Gamma(\omega)} \frac{\psi(a_f(y)(1-|y|))}{(1-|y|)^n}dy.
\end{eqnarray*}
Now if 
\begin{eqnarray*}
v(\omega) = \psi^{-1}\left(C\int_{\Gamma(\omega)} \psi (a_f(x) (1-|x|)) \frac{dx}{(1-|x|)^n}\right)
\end{eqnarray*}
then $\psi(v(\omega))\in L^1(\Sn)$. Indeed, given any function $u$ integrable on $\Bn$, Fubini's Theorem gives us
\begin{eqnarray*}
\int_{\Sn} \int_{\Gamma(\omega)} u(y) (1-|y|)^{1-n} dy d\sigma &=& \int_{\Bn} u(y)(1-|y|)^{1-n}\int_{\Sn}\chi_{\Gamma(\omega)}(y) d\sigma dy \\&\approx& \int_{\Bn} u(y) dy.
\end{eqnarray*}
The claim follows with $u(y) = \frac{\psi(a_f(y)(1-|y|))} {1-|y|}$. 

The estimates above showed that
\begin{eqnarray*}
\sup_{x\in \Gamma(\omega)}(a_f(x)(1-|x|))  \leq v(\omega),
\end{eqnarray*}
and so the proof is finished. 
\end{proof}

\begin{lemma}\label{doublinglemma}
Let $f$ be a quasiconformal mapping of $\Bn$ and $\psi$ a growth function that is doubling. If there is a function $v(\omega)$ such that $\psi(v(\omega)) \in L^1(\Sn)$ and
\begin{eqnarray*}
\sup_{x\in \Gamma(\omega)}d(f(x), \partial f(\Bn)) \leq Cv(\omega)
\end{eqnarray*} 
for almost every $\omega \in \Sn$ and some constant $C$, then $\psi(|f(\omega)|) \in L^1(\Sn)$.
\end{lemma}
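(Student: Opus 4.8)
The plan is to connect the quantity $\sup_{x\in\Gamma(\omega)} d(f(x),\partial f(\Bn))$ to the radial limit $|f(\omega)|$ (up to translating so that $0\notin f(\Bn)$, as in the other proofs), and then to push the hypothesis through $\psi$ using the doubling property. First I would reduce to the case $0\notin f(\Bn)$: if $0\in f(\Bn)$, replace $f$ by $g = f - y_0$ for a fixed $y_0\in\Rn\setminus f(\Bn)$; this changes $|f(\omega)|$ by at most $|y_0|$, which is harmless after one more application of the doubling property of $\psi$, and leaves $d(f(x),\partial f(\Bn))$ unchanged. So assume $f(x)\neq 0$ throughout $\Bn$, whence $d(f(x),\partial f(\Bn)) \le |f(x)|$ trivially, but more importantly I want a lower bound of the form $|f(x)| \le C\, d(f(x),\partial f(\Bn))$ valid along a cone; this follows from Lemma~\ref{firstmoduluscons} applied along a chain of balls $B_{x_j}$ going radially out to $\omega$, since $\mathrm{diam}(f(B_x)) \approx d(f(x),\partial f(\Bn))$ and the images of consecutive balls overlap, so $|f(x)-f(0)|$ is controlled by a sum of such diameters — but this requires the sum to telescope, which it need not in general.

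A cleaner route: fix $\omega$ for which the radial limit $f(\omega) = \lim_{r\to1} f(r\omega)$ exists (a.e.\ $\omega$ by the quasiconformal Beurling theorem). Then pick a sequence $x_j = r_j\omega \in \Gamma(\omega)$ with $r_j\to 1$; since $f(x_j)\to f(\omega)$, for $j$ large $|f(x_j)|\ge |f(\omega)|/2$, and Lemma~\ref{firstmoduluscons} gives $d(f(x_j),\partial f(\Bn)) \ge \mathrm{diam}(f(B_{x_j}))/C \ge (\text{something})$. Actually the key geometric fact is simpler than a chaining argument: because $f$ is an \emph{open} map and $f(x_j)\to f(\omega)$, the point $f(\omega)$ lies in $\overline{f(\Bn)}$, and for $j$ large the ball $B(f(x_j), d(f(x_j),\partial f(\Bn))/C)$ from Lemma~\ref{firstmoduluscons} is contained in $f(\Bn)$ and centered near $f(\omega)$; comparing $|f(\omega)| \le |f(x_j)| + |f(x_j)-f(\omega)|$ with a lower bound on $d(f(x_j),\partial f(\Bn))$ obtained from $d(f(x_j),\partial f(\Bn)) \ge d(0,\partial f(\Bn)) - $ nothing — no. Let me instead use: $d(f(x_j), \partial f(\Bn)) \le |f(x_j)|$ is the wrong direction; what I actually need is an \emph{upper} bound on $|f(\omega)|$ in terms of $\sup_{\Gamma(\omega)} d(f(\cdot),\partial f(\Bn))$. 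Taking $j\to\infty$, $|f(\omega)| = \lim_j |f(x_j)|$, and by Lemma~\ref{firstmoduluscons}, $|f(x_j)| \le |f(x_j) - f(0)| + |f(0)| \le \sum_{i\le j} \mathrm{diam}(f(B_{y_i})) + |f(0)| \le C\sum_{i\le j} d(f(y_i),\partial f(\Bn)) + |f(0)|$ along a radial chain — and each term is $\le C \sup_{x\in\Gamma(\omega)} d(f(x),\partial f(\Bn))$, but there are $\sim j$ terms, so this diverges. The honest fix is that consecutive diameters decay geometrically: once $f(x_j)$ has converged, $\mathrm{diam}(f(B_{r\omega}))\to 0$ as $r\to1$, and in fact $\sum_i \mathrm{diam}(f(B_{y_i})) < \infty$ precisely because $f(r\omega)$ is Cauchy — the total variation of the radial path is finite. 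This is the substantive point and is exactly the kind of estimate underlying the Gehring–Hayman theorem mentioned in the preamble; I would cite it or the length estimate in \cite{hpqc}.

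Granting that $|f(\omega)| \le C\sup_{x\in\Gamma(\omega)} d(f(x),\partial f(\Bn))$ for a.e.\ $\omega$, combine with the hypothesis to get $|f(\omega)| \le C' v(\omega)$ a.e. Then $\psi(|f(\omega)|) \le \psi(C' v(\omega))$, and since $C' \ge 1$ may be assumed, the doubling property of $\psi$ gives $\psi(C'v(\omega)) \le C^{\lceil \log_2 C'\rceil}\psi(v(\omega))$ pointwise, with the same constant for all $\omega$. Integrating over $\Sn$ and using $\psi(v(\omega))\in L^1(\Sn)$ yields $\int_{\Sn}\psi(|f(\omega)|)\,d\sigma < \infty$, which is the claim. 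The main obstacle is the first paragraph's geometric comparison — controlling $|f(\omega)|$, a boundary quantity, by the cone supremum of $d(f(x),\partial f(\Bn))$, an interior quantity — which genuinely needs the finiteness of the radial (or quasihyperbolic-geodesic) image length, i.e.\ a Gehring–Hayman type statement; everything after that is a one-line application of doubling.
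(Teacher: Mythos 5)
Your reduction to $0\notin f(\Bn)$ and your final doubling step are fine, but the entire proof rests on the pointwise inequality $|f(\omega)| \leq C\sup_{x\in\Gamma(\omega)} d(f(x),\partial f(\Bn))$, and this inequality is false. Take $n=2$ and $f(z)=\log\frac{1+z}{1-z}$, a conformal map of the disk onto the strip $\{|\mathrm{Im}\,w|<\pi/2\}$. Then $d(f(x),\partial f(\mathbb{B}^2))\leq \pi/2$ for every $x$, so the cone supremum is uniformly bounded, yet $|f(\omega)|\approx \log\frac{1}{|1-\omega|}$ is unbounded as $\omega\to 1$. Your own chaining computation already exposes the problem: the radial image length is a \emph{sum} of $\sim j$ diameters each comparable to $d(f(y_i),\partial f(\Bn))$, and its finiteness (or a Gehring--Hayman length comparison) in no way bounds that sum by a constant multiple of the \emph{supremum} of the individual terms. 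No citation will repair this; the inequality you are "granting" simply does not hold, so the one-line conclusion that follows it proves nothing.

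The paper's proof avoids any pointwise comparison and instead establishes a distributional (good-$\lambda$) inequality: it decomposes $U(\lambda)=\{f^*>\lambda\}$ into Whitney-type caps $S_{x_j}$ sitting at distance comparable to $1-|x_j|$ from $\partial U(\lambda)$, uses the existence of a nearby $\omega'\notin U(\lambda)$ together with $\mathrm{diam}\,f(B_{x_j})\leq C\gamma$ to get $|f(x_j)|\leq \lambda+C\gamma$ on the set where $v\leq\gamma$, and then applies Lemma~\ref{Mineq} to conclude
\begin{equation*}
\sigma(\{|f(\omega)|>2\lambda\}) \leq C(\log M)^{1-n}\,\sigma(U(\lambda)) + \sigma(\{v(\omega)>\tfrac{\lambda}{(M+1)C}\}).
\end{equation*}
Integrating against $\psi'$ turns this into $\int\psi(\tfrac12|f|)\leq C(\log M)^{1-n}\int\psi(f^*)+C(M)\int\psi(v)$; Theorem~\ref{maximalstarequiv} converts $\int\psi(f^*)$ back into $\int\psi(\tfrac12|f|)$-type terms, and choosing $M$ large makes the factor $(\log M)^{1-n}$ small enough to absorb (with a truncation $f_t=f(t\cdot)$ to keep everything finite). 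The doubling of $\psi$ enters exactly where you used it, to control $\int\psi((M+1)Cv)$ by $\int\psi(v)$, but the substance of the lemma is the absorption argument, which your proposal is missing entirely.
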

\begin{proof} 
We can assume $f(0) = 0$. Let $U(\lambda) = \{\omega \in \Sn : f^*(\omega) > \lambda \}$. Like in the proof of Lemma \ref{carleson} we can write $U(\lambda)$ as the union of caps $S_{x_j}$
\begin{eqnarray*}
U(\lambda) = \cup S_{x_j}
\end{eqnarray*}
so that the caps have uniformly bounded overlap and 
\begin{eqnarray*}
(1-|x_j|)/C \leq d(S_{x_j}, \partial U(\lambda)) \leq C (1-|x_j|).
\end{eqnarray*}
Suppose $\omega \in S_{x_j}$ with $v(\omega) \leq \gamma$. By Lemma \ref{firstmoduluscons}, our assumption on $d(f(x), \partial f(\Bn))$, and our decomposition of $U(\lambda)$, there is a constant $C = C(n,K)$ such that $d(f(x_j), \partial f(\Bn))$, diam$f(B_{x_j}) \leq C\gamma $, and there is $\omega' \in \Sn \setminus U(\lambda)$ with $d(\omega,\omega') \leq C (1-|x_j|)$.  It follows that
\begin{eqnarray*}
|f(x_j)| \leq \lambda + C\gamma.
\end{eqnarray*}
Now let $M > 1$, $\gamma= \frac{\lambda}{(M+1)C}$ and suppose $\omega \in S_{x_j}$ with $v(\omega) \leq \gamma$ and  $|f(\omega)| > 2\lambda$. Then
\begin{eqnarray*}
|f(\omega) - f(x_j)| \geq |f(\omega)| - |f(x_j)| > \lambda - C\gamma = MC\gamma \geq Md(f(x_j), \partial\Omega), 
\end{eqnarray*}
and so by Lemma \ref{Mineq}
\begin{gather*}
\sigma(\{\omega\in S_{x_j}: |f(\omega)| > 2\lambda\; \textnormal{and}\; v(\omega) \leq \gamma \}) \\ \leq
\sigma(\{\omega\in S_{x_j}: |f(\omega) - f(x_j)| > M d(f(x_j), \partial \Omega) \}) \\
\leq C\sigma (S_{x_j}) (\log M)^{1-n}.
\end{gather*}
Note if we are in the case that $U(\lambda) = \Sn$ then $U(\lambda) = S_0$, and with our assumption that $f(0) = 0$ the inequality holds anyway. 

If $|f(\omega)| > 2\lambda$ then by continuity $\omega \in U(\lambda)$, and so
\begin{gather*}
\sigma(\{\omega \in \Sn: |f(\omega)| > 2\lambda \} \\ \leq \sigma (\{\omega \in U(\lambda): |f(\omega)| > 2\lambda \; \textnormal{and}\; v(\omega) \leq \gamma \}) + \sigma (\{\omega \in \Sn: v(\omega) > \gamma \}) \\ \leq C\sum_j \sigma (S_{x_j}) (\log M)^{1-n} + \sigma (\{\omega \in \Sn: v(\omega) > \gamma \}) \\ \leq C\sigma (U(\lambda))(\log M)^{1-n} + \sigma (\{\omega \in \Sn: v(\omega) > \gamma \}).
\end{gather*} 
Thus
\begin{eqnarray*}
\int_{\Sn} \psi(\frac{1}{2}|f(\omega)|) d\sigma = \int_0^\infty \psi'(\lambda) \sigma(\{\omega \in \Sn: |f(\omega)| > 2\lambda \})d\lambda \\ \leq \int_0^\infty \psi'(\lambda) (C\sigma (U(\lambda))(\log M)^{1-n} + \sigma (\{\omega \in \Sn: v(\omega) > \frac{\lambda}{(M+1)C} \}))d\lambda \\ = C(\log M)^{1-n}\int_{\Sn}\psi(f^*(\omega))d\sigma + \int_{\Sn} \psi((M+1)C v(\omega)) d\sigma \\ \leq C(n,K)(\log M)^{1-n}\int_{\Sn}\psi(f^*(\omega))d\sigma  + C(M, n,K, C_\psi)\int_{\Sn} \psi(v(\omega))d\sigma,
\end{eqnarray*}
where $C_\psi$ denotes the doubling constant of $\psi$.

We would like to combine the integral involving $f^*(\omega)$ with the left hand side of the inequality, but since both of the integrals could be infinite we finish the proof with a convergence argument and an application of Theorem \ref{maximalstarequiv}. If we set $f_t(x) = f(tx)$, for each $0 < t < 1$, then $\sup_{x\in \Gamma(\omega)} d(f_t(x), \partial f_t(\Bn)) \leq \sup_{x\in \Gamma(\omega)}d(f(x), \partial f(\Bn))$. Applying Theorem \ref{maximalstarequiv} to $g_t(x) = f_t(x) - y$ where $y \in \Rn \setminus f(\Bn)$ is fixed, choosing $M$ large enough and letting $t\rightarrow1$ (recall here that $f(0) = 0$) we obtain
\begin{eqnarray*}
\int_{\Sn} \psi(\frac{1}{2}|f(\omega)|) d\sigma \leq C \int_{\Sn} \psi(v(\omega)) d\sigma + C < \infty.
\end{eqnarray*}
\end{proof}
These lemmas together with some of the results in the previous sections give our theorem. 
\begin{proof}[Proof of Theorem \ref{equivdoubling}]
We first show the equivalence of $(1)$ and $(3)$. Theorem \ref{equivthm} implies that (1) $\Rightarrow$ (3) for all growth functions. For the reverse implication we only need to suppose $\psi$ is doubling and that (3) holds. We claim there is a constant $C = C(n,K)$ such that 
\begin{eqnarray*}
d(f(x), \partial \Omega) \leq C f_i^*(\omega)
\end{eqnarray*}
for all $\omega\in\Sn$ and  each $x\in \Gamma(\omega)$. By Lemma \ref{firstmoduluscons} it is enough to check that this is the case for each $x = t\omega, 0 < t < 1.$ By Lemma \ref{firstmoduluscons} $f(B_x)$ contains a ball of radius $d(f(x), \partial\Omega)/C$ centered at $f(x)$, with $C$ depending only on $n, K$. So there is $y\in B_x$ such that 
\begin{eqnarray*}
d(f(x), \partial\Omega) = C|f_i(y) - f_i(x)| \leq 2Cf_i^*(\omega),
\end{eqnarray*} 
which gives the claim. Then Lemma \ref{doublinglemma} and Theorem \ref{equivthm} imply that $f\in H^\psi$.

We now show that (1) and (2) are equivalent. Assume (1) and also that $0 \notin f(\Bn)$. The measure $\mu$ given by $d\mu = \frac{\psi(a_f(x)(1-|x|))}{\psi(|f(x)|)}\frac{dx}{1-|x|}$ is a Carleson measure on $\Bn$ by Lemma \ref{afcarleson}. Then by Lemma \ref{carleson} there are absolute constants $C_1$ and $C_2$ such that
\begin{eqnarray*}
\int_{\Bn} \psi(a_f(x)(1-|x|)) \frac{dx}{1-|x|} = \int_{\Bn} \psi(|f(x)|) d\mu \leq C_1 \int_{\Sn} \psi(C_2|f(\omega)|) d\sigma.
\end{eqnarray*}
The integral on the right is finite, by Theorem \ref{equivthm}, which implies (2). 

If we assume (2) then 
\begin{align*}
\int_{\Sn}\psi(\sup_{x\in \Gamma(\omega)}(a_f(x)(1-|x|)))d\sigma < \infty
\end{align*}
by Lemma \ref{supaf}. Then $\psi(|f(\omega)|) \in L^1(\Sn)$ by Lemmas \ref{afequiv} and \ref{doublinglemma}, and finally by Theorem \ref{equivthm} we have $f\in H^\psi$.
\end{proof}
We give more details here on the examples mentioned in the introduction section regarding how Theorem~\ref{equivdoubling} may fail if either the growth function or its inverse is not doubling. In our first example let $f$ be the identity mapping on $\Bn$ and choose the growth function $\psi$ to be
\begin{eqnarray*}
\psi(t) =
\begin{cases}
\frac{1}{\log\frac{1}{t}}, & t<1/2 \\
\frac{2t}{\log2}, & t \geq 1/2.
\end{cases}
\end{eqnarray*}
This $\psi$ is doubling but $\psi^{-1}$ is not. Also, we have that $f \in H^\psi$ clearly, but
\begin{eqnarray*}
\int_{\Bn} \psi(a_f(x)(1-|x|)) \frac{dx}{1-|x|} = C + C\int_{1/2}^1\frac{1}{(1-r)\log\frac{1}{1-r}}dr,
\end{eqnarray*}
which is not finite. Thus the implication $(1) \Rightarrow (2)$ fails for this example. 

Now let $f(z) = \log (z+1), z\in\mathbb{B}^2$, and $\psi (t) = e^{t^2} -1, t\in [0,\infty]$. Then $\psi$ is a growth function that is not doubling. The second coordinate function of $f$ is a bounded function, and so $\psi(f^*_2(\omega)) \in L^1(\mathbb{S}^1)$. However, since
\begin{eqnarray*}
M(r,f) \geq \log \frac{1}{1-r}
\end{eqnarray*}
for all $r \in (0,1)$ and
\begin{eqnarray*}
\int_0^1 \psi\left(\delta \log \frac{1}{1-r}\right) dr
\end{eqnarray*}
diverges given any $\delta$, $f\notin H^\psi$ by Theorem \ref{equivthm}. This example shows how the implication $(3) \Rightarrow (1)$ can fail when the growth function is not doubling. 

Finally, let $f(z) = \log (z+1), z\in\mathbb{B}^2$, and choose

\begin{eqnarray*}
\psi(t) =
\begin{cases}
2et, & t\leq1 \\
e^{t^2}+e, & t > 1,
\end{cases}
\end{eqnarray*}
as the growth function. This $\psi$ is not doubling, and, like above, $f\notin H^\psi$ by Theorem \ref{equivthm}. However,
\begin{eqnarray*}
\int_{\Bn} \psi(a_f(x)(1-|x|))\frac{dx}{1-|x|} &=& \int_{\Bn}\psi\left(\frac{1-|x|}{|x+1|}\right)\frac{dx}{1-|x|} \\&=& \int_{\Bn}\frac{dx}{|x+1|} = \int_{B(1,1)}\frac{dx}{|x|} < \infty,
\end{eqnarray*}
 which shows that (2) does not imply (1) for this example.
 \vspace{3mm}
 
 \noindent\textit{Acknowledgement.} This paper forms a part of the thesis of the author, written under the supervision of Pekka Koskela. 
 
\bibliographystyle{plain}
\bibliography{secondpaperref}

 \noindent  Sita Benedict\\
Department of Mathematics and Statistics\\
University of Jyv\"{a}skyl\"{a}\\
P.O. Box 35 (MaD)\\
FI-40014\\
Finland\\

\noindent{\it E-mail address}: \texttt{sita.c.benedict@jyu.fi}

\end{document}